\documentclass[12pt,bezier]{article}
\usepackage{times}
\usepackage{booktabs}
\usepackage{pifont}
\usepackage{floatrow}
\floatsetup[table]{capposition=top}
\usepackage{caption}
\usepackage{mathrsfs}
\usepackage[fleqn]{amsmath}
\usepackage{amsfonts,amsthm,amssymb,mathrsfs,bbding}
\usepackage{txfonts}
\usepackage{graphics,multicol}
\usepackage{graphicx}
\usepackage{color}
\usepackage{enumerate}
\usepackage{tikz}
\usepackage{caption}
\captionsetup{%
  figurename=Fig.,
  tablename=Tab.
}
\usepackage{cite}
\usepackage{latexsym,bm}
\usepackage{indentfirst}
\usepackage{mathtools}
\pagestyle{myheadings} \markright{} \textwidth 150mm \textheight 235mm \oddsidemargin=1cm
\evensidemargin=\oddsidemargin\topmargin=-1.5cm

\newtheorem{thm}{Theorem}[section]

\newtheorem{lem}{Lemma}[section]
\newtheorem{cor}{Corollary}[section]

\newtheorem{remark}{Remark}

\theoremstyle{definition}

\addtocounter{section}{0}

\begin{document}
\title{The  spectral radius of graphs with  fractional matching number\footnote{This work is supported
by  NSFC (No. 11371372).}}
\author{Qian-Qian Chen, \,\, Ji-Ming Guo\thanks{Corresponding author;
Email addresses:
 jimingguo@hotmail.com(J.-M. Guo).}\\
Department of Mathematics, \\
 East China University of Science and Technology, Shanghai, P. R. China.}

\date{}
\maketitle
{\flushleft\large\bf Abstract}
\vspace{0.1cm}

Let $\mathcal{G}_{n, \beta^*}$ $(\mathcal{G}^*_{n,\beta^*})$ be the set of all (connected) graphs of order $n$ with fractional matching number $\beta^*$.
In this paper, the graphs with maximal spectral radius in $\mathcal{G}_{n,\beta^*}$ and $\mathcal{G}^*_{n,\beta^*}$ are characterized, respectively.
Moreover, a lower bound for the spectral radius in graphs with order $n$ to guarantee the existence of a perfect fractional matching is also
given, which generalizes the main result of O [Suil O, Spectral radius and matchings in graphs, Linear Algebra and its Applications, 2020].

\begin{flushleft}
\textbf{Keywords:}  Spectral radius; graph; matching number; fractional matching number
\end{flushleft}
\textbf{AMS Classification:} 05C50

\section{Introduction}\label{s-1}
Let $G=(V(G), E(G))$ be a simple graph with vertex set $V(G)=\{v_1, v_2, \cdots, v_n\}$ and edge set $E(G)$. The adjacency matrix of $G$ is $A(G)=(a_{ij})$,
where $a_{ij}=1$ if $v_i$ is adjacent to $v_j$ , and $0$ otherwise. The largest eigenvalue of $A(G)$ is called the spectral radius of $G$,
which denoted by $\rho(G)$. The positive unit eigenvector corresponding to $\rho(G)$ of a connected graph $G$
 is usually called the \emph{Perron vector} of $G$. The polynomial $det(xI-A(G))$ is called the \emph{characteristic polynomial} of $G$,
 where $I$ is the identity matrix.

Two distinct edges in a graph $G$ are \emph{independent} if they are not incident with a common vertex.
A set of pairwise independent edges in $G$ is called a \emph{matching} of $G$. A matching with maximum cardinality
is a \emph{maximum matching} of $G$, and the cardinality of a maximum matching is the \emph{matching number} of $G$,
denoted by $\beta(G)$. A \emph{perfect matching} in $G$ is a matching covering all vertices.
It is well known that $\beta(G)\leq\frac{n}{2}$ with equality hold if and only if $G$ has a prefect matching.
We call a matching in $G$ is a \emph{near perfect} if it covers all but one vertex of $G$.
A \emph{fractional matching} of a graph $G$ is a function $f$ giving each edge a number in $[0, 1]$ such
that $\sum_{e\in\Gamma(v)}f(e)\leq1$ for each $v\in V(G)$, where $\Gamma(v)$ is the set of edges incident to $v$.
The \emph{fractional matching number} of $G$, denoted by $\beta^*(G)$, is the maximum of
$\sum_{e\in E(G)}f(e)$
over all fractional matchings. Since $\sum_{e\in\Gamma(v)}f(e)\leq1$, it follows that
$\sum_{e \in E(G)} f(e)=\frac{1}{2} \sum_{v \in V(G)} \sum_{e\in \Gamma(v)} f(e) \leq n / 2,$
and so $\beta^{*}(G) \leq n / 2 .$ A \emph{fractional perfect matching} of a graph $G$ is a fractional
matching $f$ with $\sum_{e \in E(G)} f(e)=n / 2$, that is, $\beta^{*}(G)=n / 2 $. If a fractional
perfect matching takes only the values 0 or $1$, then it is a perfect matching.

Given a vertex subset $S$ of $G$, the subgraph induced by $S$ is denoted by $G[S]$. For any $v\in V(G)$, denoted by $N(v)$ the neighborhood
 of $v$. Let $G_1=(V_1,E_1)$ and $G_2=(V_2,E_2)$ be two vertex disjoint graphs.
 The \textit{union} $G_1 \cup G_2$ is a graph defined by $G_1 \cup G_2=(V_1 \cup V_2,E_1 \cup E_2)$,
 the \textit{join} $G_1 \vee G_2$  is a graph obtained from $G_1 \cup G_2$ by joining each vertex of $G_1$ to that of $G_2$.

 In 1986, Brualdi and Solheid \cite{Brualdi} proposed a classic question of extremal graph theory which has aroused the widely attention:
 \begin{center}
 What is the upper and lower bounds for the spectral radius
 in a family of graphs?
  \end{center}

The relationship between the matching number
 and the eigenvalues of graphs has been investigated by several researchers. For example,   Chang and Tian \cite{Chang}
 achieved the largest and the second largest spectral radius among unicyclic graphs with perfect matching. Guo \cite{Guo}
 presented a sharp upper bound for the Laplacian spectral radius of a tree in terms of the matching number and number of vertices.
 Brouwer and Haemers \cite{Brouwer} provided some sufficient conditions for the existence of a perfect matching
 in a graph in terms of the largest and the second smallest Laplacian eigenvalues. Recently, O \cite{SO} presented a lower bound for the adjacency
 spectral radius which guarantees the perfect matching in $G$. Up to now, much attention has been paid on this topic, and we refer to\cite{Cioa1,Cioa2,Cioa3}.

In addition, the relationship between  the fractional matching number and spectral radius of graphs were also investigated by several
researchers in recent years. O \cite{SO1} studied the connection between the
 spectral radius of a connected graph with minimum degree $\delta$ and its fractional matching number,
 and give a lower bound on the fractional matching number in terms of spectral radius and minimum degree.
 The relationship between the fractional matching number and laplacian spectral radius \cite{Xue},
 signless Laplacian spectral radius\cite{Pan} were also studied, respectively. Recently, Xue \cite{Xue} et al.
 gave the following sufficient condition for a graph to have a fractional perfect matching:
 \begin{thm}\label{thm-11}
 Let $G$ be a connected graph on $n$ vertices with minimum degree $\delta$. If $\lambda_1(G)<\delta\sqrt{\frac{n+1}{n-1}}$,
 then $G$ has a fractional perfect matching.
 \end{thm}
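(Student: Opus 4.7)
The plan is to argue by contradiction. Assume $G$ is connected with minimum degree at least $\delta$ and $\rho(G)<\delta\sqrt{(n+1)/(n-1)}$, but that $G$ admits no fractional perfect matching; I will derive a lower bound on $\rho(G)$ that contradicts the hypothesis.

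Step one is to extract a combinatorial witness for the absence of a fractional perfect matching. The half-integrality of the fractional matching polytope yields the min--max identity $\beta^{*}(G)=\tfrac{1}{2}\bigl(n-\max_{S\subseteq V(G)}(i(G-S)-|S|)\bigr)$, where $i(H)$ denotes the number of isolated vertices of $H$. Since $\beta^{*}(G)<n/2$, there exists $S\subseteq V(G)$ with $i(G-S)\ge|S|+1$. Write $s=|S|$, let $I$ be the set of isolated vertices of $G-S$, and set $i=|I|$. Two structural bounds drop out immediately: every $v\in I$ has $N_{G}(v)\subseteq S$ and $d_{G}(v)\ge\delta$, so $s\ge\delta$; and $s+i\le n$ combined with $i\ge s+1$ forces $s\le(n-1)/2$.

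Step two is a Rayleigh-quotient lower bound with a two-valued test vector. Define $x\in\mathbb{R}^{V(G)}$ by $x_{v}=1$ for $v\in S$, $x_{v}=a$ for $v\in I$ (with $a>0$ to be optimized), and $x_{v}=0$ otherwise. There are no edges within $I$ (as $I$ is independent in $G-S$) nor between $I$ and $V\setminus(S\cup I)$, while each vertex of $I$ contributes at least $\delta$ edges to $S$, so
$$\rho(G)\ \ge\ \frac{x^{\top}A(G)x}{x^{\top}x}\ \ge\ \frac{2ai\delta}{s+ia^{2}}.$$
Choosing $a=\sqrt{s/i}$ maximizes the right-hand side and yields $\rho(G)\ge\delta\sqrt{i/s}$.

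Step three closes the loop by minimizing $i/s$ over integer pairs with $i\ge s+1$ and $s\le(n-1)/2$. The minimum is achieved with $i=s+1$ and $s$ as large as possible: for $n$ odd this gives $s=(n-1)/2$, $i=(n+1)/2$, hence $i/s=(n+1)/(n-1)$; for $n$ even it gives $s=(n-2)/2$, $i=n/2$, hence $i/s=n/(n-2)>(n+1)/(n-1)$. In either case $\rho(G)\ge\delta\sqrt{(n+1)/(n-1)}$, contradicting the hypothesis. The main obstacle I expect is step one, i.e.\ having the half-integrality min--max characterization at hand to produce the witnessing set $S$; once $S$ is available, the vector $x$ and the optimization in $a$ are essentially forced by the shape of the target bound, and the final parameter count is a short exercise.
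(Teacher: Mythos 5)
Your argument is correct, but note that the paper never proves Theorem \ref{thm-11}: it is quoted without proof as a result of Xue, Zhai and Shu \cite{Xue}, so there is no in-paper proof to compare against. Checking your steps on their own merits: the fractional Berge--Tutte identity $\beta^*(G)=\tfrac12\bigl(n-\max_S(i(G-S)-|S|)\bigr)$ does yield a set $S$ with $i(G-S)\ge |S|+1$ when $\beta^*(G)<n/2$; the bounds $s\ge\delta$ and $s\le (n-1)/2$ follow as you say; the Rayleigh quotient with the two-valued vector gives $\rho(G)\ge 2ai\delta/(s+ia^2)$, optimized at $a=\sqrt{s/i}$ to $\delta\sqrt{i/s}$; and $i/s\ge (s+1)/s\ge 1+2/(n-1)=(n+1)/(n-1)$ closes the contradiction. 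All of this is sound (the degenerate cases $\delta=0$ or $S=\emptyset$ cannot occur for a connected graph on $n\ge 2$ vertices under the hypothesis).

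One remark that ties your proof to this paper's own toolkit: the ``main obstacle'' you flag in step one, namely having the min--max characterization available, can be bypassed using exactly the half-integral fractional transversal of Lemma \ref{lem-1-3}, as in the proof of Lemma \ref{lem-4-1}. Taking an optimal $g$ with values in $\{0,\tfrac12,1\}$ and setting $W=g^{-1}(1)$, $R=g^{-1}(0)$, $C=g^{-1}(\tfrac12)$, the transversal constraint forces $R$ to be independent with all its neighbours in $W$, and $\beta^*=\tfrac12\bigl(n-(|R|-|W|)\bigr)<n/2$ forces $|R|\ge |W|+1$; so $(S,I)=(W,R)$ is precisely the witness your step one requires, derived from LP duality rather than from the deficiency formula. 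With that substitution your proof is self-contained relative to the paper's stated lemmas. Your route (an explicit weighted test vector in the Rayleigh quotient) is also more elementary than the quotient-matrix/interlacing computations the paper uses elsewhere, at the cost of giving only the bound rather than the extremal structure.
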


Let $\mathcal{G}_{n, \beta}$ be the set of all graphs of order $n$ with matching number $\beta$ and $\mathcal{G}^*_{n,\beta}$ be the set of all connected
graphs of order $n$ with matching number $\beta$. Feng et al. \cite{Feng} determined the graph maximizing the  spectral radius in $\mathcal{G}_{n,\beta}$,
which we present in Theorem \ref{thm-1}, and Chen\cite{Chen} characterized the graph with maximal spectral radius in $\mathcal{G}^*_{n,\beta}$,
which we present in Theorem \ref{thm-2}.

\begin{thm}\label{thm-1}
For any $G \in \mathcal{G}_{n,\beta}$, we have
\begin{enumerate}[(1)]
\vspace{-0.2cm}
\item if $n=2\beta$ or $2\beta+1$, then $\rho(G)\leq \rho(K_n)$, with equality if and only if $G\cong K_n$;
\vspace{-0.2cm}
\item if $2\beta+2\leq n<3\beta+2$, then $\rho(G)\leq2\beta$, with equality if and only if $G\cong K_{2\beta+1}\cup\overline{K_{n-2\beta-1}}$;
\vspace{-0.2cm}
\item if $n=3\beta+2$, then $\rho(G)\leq2\beta$, with equality if and only if $G\cong K_\beta\vee\overline{K_{n-\beta}}$ or $G\cong K_{2\beta+1}\cup\overline{K_{n-2\beta-1}}$;
\vspace{-0.4cm}
\item if $n>3\beta+2$, then $\rho(G)\leq\frac{\beta-1+\sqrt{(\beta-1)^{2}+4\beta(n-\beta)}}{2}$, with equality if and only if $G\cong K_\beta\vee\overline{K_{n-\beta}}$.

\end{enumerate}
\end{thm}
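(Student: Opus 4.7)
The plan is first to reduce the extremal problem to edge-maximal graphs and then to read off the extremizer by a structural analysis followed by a direct spectral comparison.

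\textbf{Step 1 (Edge-maximality reduction).} Let $G^\ast \in \mathcal{G}_{n,\beta}$ attain the maximum of $\rho$. Adding any edge to a graph strictly increases its spectral radius (Perron--Frobenius, applied to the component carrying the Perron vector), so $G^\ast$ is edge-maximal for the property $\beta(\cdot)=\beta$: every non-edge $uv$ of $G^\ast$ satisfies $\beta(G^\ast+uv)=\beta+1$. This removes the awkwardness of the equality $\beta(G^\ast)=\beta$ and lets us focus on graphs that are exactly one edge away from being $(\beta+1)$-matchable.

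\textbf{Step 2 (Gallai--Edmonds structure).} Apply the Gallai--Edmonds decomposition $V(G^\ast)=D\cup A\cup C$, where $D$ is the set of vertices missed by some maximum matching, $A=N(D)\setminus D$, and $C=V(G^\ast)\setminus(A\cup D)$. Edge-maximality then forces four structural features: $G^\ast[A\cup C]$ is a clique; each (factor-critical) component of $G^\ast[D]$ is an odd clique $K_{2k_i+1}$; every vertex of $A$ is adjacent to every vertex of $D$; and there are no edges between $C$ and $D$ nor between distinct components of $G^\ast[D]$. If any of these failed one could add a non-edge without creating an augmenting path, contradicting edge-maximality. Consequently
\[
G^\ast \cong K_a \vee \bigl(K_c \cup K_{2k_1+1} \cup \cdots \cup K_{2k_t+1}\bigr),
\]
where $a=|A|$, $c=|C|$, $\sum_i k_i + a + c/2 = \beta$, and $t=(n-2\beta)+a$ by the Tutte--Berge identity.

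\textbf{Step 3 (Consolidation and comparison).} Using Perron-vector inequalities one shows that, with the total number of vertices in $D$ fixed, the spectral radius strictly increases under two opposing operations: merging two of the odd cliques into a single odd clique of their combined size, or shattering every odd clique into singletons while absorbing the former joint neighborhood into $A$. This concentration argument leaves only three extremal candidates: $K_n$ (feasible iff $n\le 2\beta+1$), $K_{2\beta+1}\cup\overline{K_{n-2\beta-1}}$ (take $a=c=0$, $t=1$, $k_1=\beta$), and $K_\beta\vee\overline{K_{n-\beta}}$ (take $a=\beta$, $c=0$, $t=n-\beta$, all $k_i=0$). Direct solution of the Perron equations gives
\[
\rho(K_n)=n-1,\quad \rho(K_{2\beta+1}\cup\overline{K_{n-2\beta-1}})=2\beta,\quad \rho(K_\beta\vee\overline{K_{n-\beta}})=\frac{\beta-1+\sqrt{(\beta-1)^{2}+4\beta(n-\beta)}}{2},
\]
and solving for when the latter two coincide yields precisely $n=3\beta+2$; the four cases of the theorem then fall out by comparing thresholds.

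\textbf{Main obstacle.} The hardest ingredient is the Perron-vector comparison in Step 3: ruling out mixed configurations in which $G^\ast[D]$ is a union of several odd cliques of genuinely different sizes while $A$ is of intermediate size. Writing the Perron equations for such a graph and tracking how $\rho$ changes under the merge/split operations requires a careful convexity argument in the cluster sizes $(k_1,\ldots,k_t)$. Conceptually these are just two local moves shown to be strictly improving, but verifying strict improvement in every regime is expected to be the most computation-heavy part of the proof.
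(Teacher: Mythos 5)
First, a point of reference: the paper does not prove Theorem \ref{thm-1} at all --- it is quoted as a known result of Feng, Yu and Zhang \cite{Feng} --- so there is no in-paper proof to compare against. Your outline is essentially the route of \cite{Feng}, which is also the template the paper follows for its fractional analogues (Lemma \ref{lem-4-1} and Theorems \ref{thm-4-1}--\ref{thm-4-2}): pass to a saturated supergraph, use a Berge--Tutte/Gallai--Edmonds certificate to place the extremal graph in the family $K_a\vee(K_{n_1}\cup\cdots\cup K_{n_t})$ with all $n_i$ odd, and then optimize over the parameters. The strategy is the right one.

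The genuine gap is Step 3. As stated, the claim that $\rho$ strictly increases under \emph{both} of your opposing moves cannot be correct: iterating the merge move drives you to $K_{2\beta+1}\cup\overline{K_{n-2\beta-1}}$, iterating the shatter move drives you to $K_\beta\vee\overline{K_{n-\beta}}$, and which endpoint wins depends on whether $n$ is below or above $3\beta+2$; if the shatter move were always strictly improving you would conclude $\rho(K_\beta\vee\overline{K_{n-\beta}})>2\beta$ for all $n\geq 2\beta+2$, which is false. What is actually needed is a two-stage argument: (i) for fixed $a$, merging the odd parts into a single clique $K_{2\beta-2a+1}$ plus singletons increases $\rho$ (this is a genuine monotone local move, provable by a Rayleigh-quotient or edge-shifting argument); (ii) for the resulting one-parameter family $K_a\vee(K_{2\beta-2a+1}\cup(n-2\beta+a-1)K_1)$, write the $3\times 3$ quotient matrix of the equitable partition, form its characteristic polynomial $f(x,a)$, and verify $f(\rho_0,a)\geq 0$ for the target value $\rho_0$ ($2\beta$ or $\rho(K_\beta\vee\overline{K_{n-\beta}})$, according to the regime) and all intermediate $a$, exactly as in the proof of Theorem \ref{thm-4-1}. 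That inequality, not a convexity-of-local-moves argument, is what pins the optimum to the endpoints $a=0$ and $a=\beta$ and delivers the equality cases. You should also repair several smaller slips: two odd cliques merge to an \emph{even} clique, so the correct move is $K_{2k_1+1}\cup K_{2k_2+1}\to K_{2k_1+2k_2+1}\cup K_1$; the candidate $K_{2\beta+1}\cup\overline{K_{n-2\beta-1}}$ has $t=n-2\beta$ odd components, not $t=1$; saturation in fact forces $C=\emptyset$ whenever $n>2\beta$ (adding an edge between an even component of $G[C]$ and an odd component of $G[D]$ does not raise the matching number), while the absence of $C$--$D$ edges is automatic from the decomposition rather than a consequence of maximality; and for disconnected graphs adding an edge need not increase $\rho$, so the reduction should pass to a saturated supergraph with the same matching number and treat the equality case separately rather than assert that $G^\ast$ itself is saturated.
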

\begin{thm}\label{thm-2}
For any $G \in \mathcal{G}^*_{n,\beta}$, we have
\begin{enumerate}[(1)]
\vspace{-0.2cm}
\item if $n=2\beta$ or $2\beta+1$, then $\rho(G)\leq n-1$, with equality if and only if $G\cong K_n$;
\vspace{-0.2cm}
\item if $2\beta+2\leq n\leq3\beta-1$ , then $\rho(G)\leq \theta$, with equality if and only if $G\cong K_1\vee(K_{2\beta-1}\cup(n-2\beta)K_1)$, where $\theta$ is the largest root of $x^3-(2\beta-2)x+(1-n)x+2(\beta-1)(n-2\beta)=0$ and $\beta\geq 3, n\geq8$;
\vspace{-0.4cm}
\item if $n\geq3\beta$, then $\rho(G)\leq \frac{\beta-1+\sqrt{(\beta-1)^2+4\beta(n-\beta)}}{2}$, with equality if and only if $G\cong K_\beta\vee\overline{K_{n-\beta}}.$

\end{enumerate}
\end{thm}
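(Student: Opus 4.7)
\noindent\textbf{Proof proposal for Theorem \ref{thm-2}.}

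The plan is to dispatch case (1) trivially and then, for (2) and (3), isolate a one-parameter family of candidate extremal graphs through a Tutte--Berge / edge-addition argument, and finally single out the winner via an equitable-partition calculation. For (1), since $G$ is a connected graph on $n\leq 2\beta+1$ vertices, $\rho(G)\leq n-1=\rho(K_n)$ follows from $G\subseteq K_n$, and $\beta(K_n)=\lfloor n/2\rfloor=\beta$ identifies $K_n$ as the unique extremiser.

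For (2) and (3), take an extremiser $G\in\mathcal{G}^{*}_{n,\beta}$ and fix a Tutte--Berge witness $S\subseteq V(G)$, i.e.\ $o(G-S)-|S|=n-2\beta$. I would first show that any edge $e$ that is (a) contained in $S$, (b) contained in a component of $G-S$, or (c) joining $S$ to $V(G)\setminus S$ satisfies $\beta(G+e)=\beta$, because in each case $o((G+e)-S)-|S|$ is unchanged and so the Tutte--Berge deficiency is preserved; combined with the strict Perron inequality $\rho(G+e)>\rho(G)$ for a connected graph, extremality forces each such edge to be already present. Consequently $G\cong K_s\vee F$ with $s=|S|$ and $F$ a disjoint union of cliques.

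The next reduction is to show that, after further edge additions or Kelmans-style vertex transfers between components of $F$ (each keeping $\beta(G)\leq\beta$ while strictly increasing $\rho$), the extremal $F$ consists of exactly one large clique plus a number of isolated vertices. The surviving family is $G_s:=K_s\vee\bigl(K_{2\beta-2s+1}\cup(n-2\beta+s-1)K_1\bigr)$ for $1\leq s\leq\beta$, each of which indeed satisfies $\beta(G_s)=\beta$. The equitable partition of $V(G_s)$ into the blocks $S$, $V(K_{2\beta-2s+1})$, and the isolated vertices yields a $3\times 3$ quotient matrix whose characteristic polynomial $p_s(x)$ has $\rho(G_s)$ as its largest root; at $s=\beta$ this polynomial factors, producing the closed form $\tfrac{\beta-1+\sqrt{(\beta-1)^{2}+4\beta(n-\beta)}}{2}$ of case (3), and at $s=1$ it becomes the cubic $x^{3}-(2\beta-2)x^{2}+(1-n)x+2(\beta-1)(n-2\beta)$ of case (2).

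The main obstacle is then the selection of the optimal $s$ as a function of $n$. My plan is to evaluate $p_{s-1}(\rho(G_s))$ and track its sign, reducing the question to an explicit polynomial inequality in $s,n,\beta$. I expect that for $n\geq 3\beta$ the largest root of $p_s$ is increasing in $s$, so $s=\beta$ wins and yields the formula of case (3), whereas for $2\beta+2\leq n\leq 3\beta-1$ the ordering reverses and $s=1$ wins, yielding $\theta$ of case (2). The delicate part is the boundary regime near $n=3\beta$, where the two candidate curves cross; verifying the correct side of the inequality uniformly in $s$ is where I expect the bulk of the computational work to sit, most likely via a direct sign analysis of $p_{s-1}(\rho(G_s))$ rather than closed-form root manipulations.
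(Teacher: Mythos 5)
First, a point of reference: the paper does not prove Theorem \ref{thm-2} at all --- it is quoted from \cite{Chen} --- so there is no in-paper proof to match your argument against. The closest internal analogue is the paper's treatment of the fractional case (Lemma \ref{lem-4-1} and Theorem \ref{thm-4-1}), and your outline runs parallel to it: you reduce to the one-parameter family $G_s=K_s\vee(K_{2\beta-2s+1}\cup(n-2\beta+s-1)K_1)$ and then compare quotient-matrix cubics across $s$, exactly as the paper reduces to $K_s\vee(K_{2\beta^*-2s}\cup(n+s-2\beta^*)K_1)$ and studies $f(x,s)$. The one structural difference is the reduction tool: you use a Tutte--Berge witness $S$ plus edge additions, while the paper's fractional reduction uses an optimal $\{0,\tfrac12,1\}$-valued fractional transversal. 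Your edge-addition step is correct (edges inside $S$, inside a component of $G-S$, or between $S$ and its complement leave $o(G-S)-|S|$ unchanged, hence leave $\beta$ unchanged, and strictly increase $\rho$ by Lemma \ref{lem-1-1}), and your identification of the candidate family and of the two boundary polynomials checks out.

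Two steps are genuinely under-justified. The consolidation of $F$ into one clique plus singletons is not a pure edge-addition argument, and your invariant ``each keeping $\beta(G)\leq\beta$'' is the wrong one: you must preserve $\beta(G)=\beta$ exactly. Merging two cliques of $F$ is an edge addition, but merging two \emph{odd} components drops $o(G-S)$ by $2$ and can raise $\beta$ (e.g.\ $K_1\vee(2K_3\cup K_1)$ versus $K_1\vee(K_6\cup K_1)$), so it is forbidden; the admissible moves are absorbing even components (which preserves the odd-component count) and the transfer $K_a\cup K_b\mapsto K_{a+b-1}\cup K_1$ for $a,b$ odd, which deletes edges as well as adding them and therefore needs both a separate Perron-vector/Kelmans monotonicity lemma inside the join and an exhibited $\beta$-matching in the new graph. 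You gesture at this with ``Kelmans-style vertex transfers,'' but the parity bookkeeping is exactly where a careless version of this step fails. Second, the entire quantitative content of (2) and (3) is deferred: monotonicity of $\rho(G_s)$ in $s$ is stronger than needed and not obviously true, and the standard route (the one the paper uses for the fractional analogue) is to symmetrize the quotient matrix, bound its second root by $2\beta-2s-1$ via Cauchy interlacing, and then verify a sign condition of the form $p_s(\rho(G_{\mathrm{winner}}))\ge 0$ by an explicit inequality in $s,n,\beta$, together with the equality analysis needed for uniqueness. Until that computation is carried out, what you have is a sound plan rather than a proof.
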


Let $\mathcal{G}_{n, \beta^*}$ $(\mathcal{G}^*_{n,\beta^*})$ be the set of all (connected) graphs of order $n$ with fractional
matching number $\beta^*$. Inspired by the above observations, in this paper, we characterize the graph with maximal spectral radius in
 $\mathcal{G}_{n,\beta^*}$ and $\mathcal{G}^*_{n,\beta^*}$, respectively. Moreover, we give a lower bound for the spectral radius
of graphs with order $n$ to guarantee the existence of a perfect fractional matching, which generalizes the main result of O\cite{SO}.

\section{Preliminaries}\label{s-2}
Prior to showing our main results, in this section, we present some notations and some useful lemmas which will be used in the proofs of our main results.
\begin{lem}[\cite{Stevanovic}]\label{lem-1-1}
If $H$ is a proper subgraph of a connected graph $G$, then $$\rho(H)<\rho(G).$$
\end{lem}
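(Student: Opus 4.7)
The plan is to reduce the statement to a consequence of the Perron--Frobenius theorem applied to the adjacency matrix $A(G)$. First, I would handle the possibility that $H$ is not spanning: pad $H$ with isolated vertices so as to obtain a spanning subgraph $H' \subseteq G$. Since adjoining isolated vertices only appends zero rows and columns to the adjacency matrix, $\rho(H') = \rho(H)$. Thus it suffices to compare $\rho(H')$ with $\rho(G)$, where now $A(H')$ and $A(G)$ have the same dimension, $A(H') \leq A(G)$ entrywise, and $A(H') \neq A(G)$, because $G$ contains at least one edge absent from $H'$.

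The core claim to establish is then: if $B$ is a nonnegative symmetric matrix dominated entrywise by an irreducible nonnegative matrix $A$, with $B \neq A$, then $\rho(B) < \rho(A)$. Since $G$ is connected, $A(G)$ is irreducible, so applying this with $A = A(G)$ and $B = A(H')$ closes the argument. To obtain the weak inequality, let $y > 0$ be the Perron eigenvector of $A$ (strictly positive by Perron--Frobenius and irreducibility) and let $v \geq 0$, $v \neq 0$, be a Perron eigenvector of $B$. Then
\[
(\rho(A) - \rho(B))\, y^{\top} v \;=\; y^{\top} A v - y^{\top} B v \;=\; y^{\top}(A - B)\,v \;\geq\; 0,
\]
and $y^{\top} v > 0$ (since $y > 0$ and $v \geq 0$, $v \neq 0$) yields $\rho(B) \leq \rho(A)$.

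The delicate step is upgrading this to a strict inequality. Suppose for contradiction that $\rho(A) = \rho(B)$. The identity above forces $y^{\top}(A-B) v = 0$; together with $y > 0$ and $(A-B) v \geq 0$ componentwise, this compels $(A-B) v = 0$. Writing $S := \mathrm{supp}(v)$, the entrywise identity gives $A_{ij} = B_{ij}$ for every $i$ and every $j \in S$. If $S$ is the whole index set then $v > 0$ and $(A-B)v=0$ imply $A = B$, a contradiction. Otherwise, combining $Bv = \rho(B) v$ with $v_i = 0$ for $i \notin S$ shows $B_{ij} = 0$ (hence $A_{ij}=0$) for all $j \in S$, $i \notin S$; by the symmetry of the adjacency matrices, also $A_{ij} = 0$ for $i \in S$, $j \notin S$. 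Thus $A(G)$ is block-diagonal along the partition $S, \overline{S}$, contradicting its irreducibility.

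The main obstacle is precisely this passage from $\leq$ to $<$, because the Perron vector of $B = A(H')$ need not be strictly positive whenever $H$ is disconnected or has multiple components of distinct spectral radii. The hypothesis that $G$ is \emph{connected} enters exactly here: irreducibility of $A(G)$ is the structural ingredient that rules out the only configuration in which equality could occur, and without it the conclusion fails (one may take $G = 2K_{2}$ and $H$ a spanning star plus an isolated vertex, for instance).
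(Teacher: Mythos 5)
Your argument is correct: the reduction to a spanning subgraph via padding with isolated vertices, the Rayleigh-type identity $(\rho(A)-\rho(B))\,y^{\top}v = y^{\top}(A-B)v \ge 0$ with the positive Perron vector $y$ of the irreducible matrix $A(G)$, and the upgrade to strict inequality by showing that equality would force $A(G)$ to decompose into blocks along the support of $v$, contradicting connectivity, is exactly the standard Perron--Frobenius proof of this fact. The paper itself offers no proof — it quotes the lemma from Stevanovi\'c's book — so there is nothing to diverge from; your write-up supplies a complete and correct proof of the cited result, including the right observation that connectivity of $G$ is precisely what rules out the equality case.
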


Given a graph $G$, the vertex partition $\Pi: V(G)=V_1\cup V_2\cup \cdots \cup V_k$ is said to be an equitable partition
if for any $v\in V_i, |V_j\cap N(v)|=b_{ij}$ is constant depending only on $i, j(1\leq i, j\leq k)$. The matrix $Q(G)=(b_{ij})$ is called the \emph{quotient matrix} of $\Pi$.\

\begin{lem}[\cite{Godsil}]\label{lem-1-21}
Let $\Pi: V(G)=V_1\cup V_2\cup \cdots \cup V_k$ be an equitable partition of $G$ with quotient matrix $Q$.
Then $det(xI-Q(G))|det(xI-A(G))$. Furthermore, the largest eigenvalue of $Q(G)$ is just the spectral radius of $G$.
\end{lem}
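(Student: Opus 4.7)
The plan is straightforward once the characteristic matrix of $\Pi$ is introduced. I would write $P$ for the $n\times k$ indicator matrix with $P_{v,i}=1$ if $v\in V_i$ and $0$ otherwise. The first observation is that equitability is equivalent to the intertwining identity $AP=PQ$: the $(v,j)$-entry of $AP$ counts $|N(v)\cap V_j|$, and by the definition of an equitable partition this equals $b_{ij}$ whenever $v\in V_i$, which is exactly the $(v,j)$-entry of $PQ$.

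For the divisibility claim, I would use the intertwining to produce an $A$-invariant subspace. The columns of $P$ are indicator vectors of the disjoint nonempty parts $V_i$, hence pairwise orthogonal and linearly independent, so their span $U$ is a $k$-dimensional subspace of $\mathbb{R}^n$; by $AP=PQ$ the subspace $U$ is $A$-invariant. Completing an orthonormal basis of $U$ to one of $\mathbb{R}^n$ and using symmetry of $A$ block-diagonalizes it as $\begin{pmatrix} M & 0\\ 0 & B\end{pmatrix}$, where $M$ represents $A|_U$. A direct computation with the normalized matrix $\widetilde P=P(P^{T}P)^{-1/2}$ (where $P^{T}P=\mathrm{diag}(|V_1|,\ldots,|V_k|)$) shows $\widetilde P^{T}A\widetilde P=(P^{T}P)^{-1/2}Q(P^{T}P)^{1/2}$, which is similar to $Q$. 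Hence $\det(xI-A)=\det(xI-Q)\cdot\det(xI-B)$, giving the claimed divisibility.

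For the spectral-radius part, I would take a non-negative Perron eigenvector $x$ with $Ax=\rho(G)x$. Transposing $AP=PQ$ and using that $A$ is symmetric gives $P^{T}A=Q^{T}P^{T}$, so multiplying $Ax=\rho(G)x$ on the left by $P^{T}$ yields $Q^{T}(P^{T}x)=\rho(G)(P^{T}x)$. Since $x$ is nonzero and non-negative, $P^{T}x$ is a nonzero non-negative vector, so $\rho(G)$ is an eigenvalue of $Q^{T}$ and hence of $Q$. Combined with $\rho(Q)\le\rho(A)=\rho(G)$ from the divisibility just established, this forces $\rho(Q)=\rho(G)$.

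The main subtlety is in the middle step: one must verify that the block representing $A|_U$ is actually similar to $Q$, not merely a matrix sharing the same characteristic polynomial; this is handled cleanly by the explicit change of basis via $\widetilde P$. Everything else is linear-algebraic bookkeeping, and no graph-theoretic input beyond the definition of the equitable partition is needed.
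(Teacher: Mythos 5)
The paper offers no proof of this lemma; it is quoted verbatim from Godsil--Royle, and your argument is the standard textbook proof of exactly that result (characteristic matrix $P$, the intertwining $AP=PQ$, invariance of the column space of $P$, and the Perron-vector projection $P^{T}x$ for the spectral-radius claim), all of which is correct. The only blemish is cosmetic: since $P^{T}AP=P^{T}PQ$, the conjugated block is $(P^{T}P)^{1/2}Q(P^{T}P)^{-1/2}$ rather than $(P^{T}P)^{-1/2}Q(P^{T}P)^{1/2}$, but either way it is similar to $Q$, so nothing in the argument is affected.
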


\begin{lem}[\cite{ER}]\label{lem-1-2}
For any graph $G, 2 \beta^*(G)$ is an integer. Moreover, there is a fractional matching $f$ for which
$$
\sum_{e \in E(G)} f(e)=\beta^*(G),
$$
such that $f(e) \in\{0,1 / 2,1\}$ for every edge $e .$
\end{lem}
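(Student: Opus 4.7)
The plan is to prove both assertions at once by exhibiting an optimal fractional matching whose support has a very restricted combinatorial shape. Since the feasible region $\{f \in \mathbb{R}^{E(G)}_{\geq 0} : \sum_{e \in \Gamma(v)} f(e) \leq 1 \text{ for all } v \in V(G)\}$ is compact and $\sum_e f(e)$ is continuous, $\beta^*(G)$ is attained; among all optima I would pick one $f$ whose support $F = \{e : f(e) > 0\}$ is inclusion-minimal, and study the subgraph $H = (V(G), F)$.

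The central structural claim is that every connected component of $H$ is either a single edge $e$ with $f(e) = 1$, or an odd cycle on which $f \equiv 1/2$. Granting this, an edge-component contributes $1$ to the total weight while an odd cycle of length $2k+1$ contributes $(2k+1)/2$, so $2\beta^*(G)$ is an integer and $f$ takes values only in $\{0, 1/2, 1\}$, as required.

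To establish the claim I would argue by local $\pm\varepsilon$ perturbation along walks inside a component $K$. If $K$ is a tree, alternating $\pm\varepsilon$ along any leaf-to-leaf path produces, for a suitable sign, a feasible fractional matching of no smaller weight but strictly smaller support, contradicting minimality of $F$. If $K$ contains an even cycle, the same alternation around the cycle preserves all vertex constraints and the total weight while removing an edge from $F$. If $K$ is unicyclic with an odd cycle but also carries a pendant tree, combining a path perturbation with a double-traversal of the cycle does the job. Finally, if $K$ contains two odd cycles, traversing one cycle, the connecting path twice, and the other cycle yields an even signed closed walk along which $\pm\varepsilon$ again shrinks $|F|$. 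The only surviving possibilities for $K$ are a single edge or a single odd cycle; on the former, maximality of $\sum_e f(e)$ forces $f(e) = 1$, and on an odd cycle $v_1 v_2 \cdots v_{2k+1} v_1$ the constraints $f(v_i v_{i+1}) + f(v_{i+1} v_{i+2}) \le 1$ must all be tight at the optimum, and the resulting cyclic linear system has the unique solution $f \equiv 1/2$.

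The main obstacle is the perturbation case analysis: one must verify for every "bad" component structure that a $\pm\varepsilon$ perturbation along the described walk is simultaneously feasibility-preserving, weight-non-decreasing, and strictly support-reducing. This boils down to a parity check on the signs around cycles in $K$, using the fact that an even closed walk admits a consistent $\pm$ assignment while an odd one does not. Once this combinatorial perturbation lemma is set up cleanly, the rest of the proof is bookkeeping.
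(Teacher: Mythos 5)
The paper does not prove this lemma at all --- it is quoted verbatim from the cited reference of Scheinerman and Ullman, so there is no in-paper argument to compare against. Your proposal is a correct reconstruction of the standard proof of half-integrality of the fractional matching polytope: take an optimum with inclusion-minimal support and show each support component is a single edge of weight $1$ or an odd cycle of weight $\tfrac12$ throughout, whence $2\beta^*(G)\in\mathbb{Z}$. The case analysis you defer is genuinely where the work lies, but each case does close: in the tree case note that for a leaf-to-leaf path with at least two edges the two extreme edges $e_1,e_k$ satisfy $f(e_1)<1$ and $f(e_k)<1$ (they each share a vertex with another positive support edge), so an odd-length path admits a weight-increasing perturbation (contradicting optimality) and an even-length path admits, in at least one of the two sign directions, a weight-preserving perturbation that drives some edge to $0$ before any leaf constraint becomes violated (use $f(e_{k-1})\le 1-f(e_k)$), contradicting support-minimality; the even-cycle and two-odd-cycle perturbations leave every vertex sum unchanged, so feasibility is automatic; the odd-cycle-with-pendant-tree walk has nonzero net weight, so optimality is contradicted directly; and on a lone odd cycle a single slack constraint permits an alternating perturbation that strictly increases the weight, so all constraints are tight and the cyclic system forces $f\equiv\tfrac12$. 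With those verifications written out, your argument is a complete and self-contained proof of the lemma.
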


A fractional transversal of a graph $G$ is a function $g: V(G) \rightarrow[0,1]$ satisfying $\sum_{v \in e} g(v) \geq 1$ for every $e \in E(G) .$
The fractional transversal number is the infimum of $\sum_{v \in V(G)} g(v)$ taken over all fractional transversal $g$ of $G .$
By duality, the fractional transversal number of graph $G$ is the fractional matching number $\beta^*$ of $G$.
\begin{lem}[\cite{ER}]\label{lem-1-3}
For any graph $G$, there is a fractional transversal $g$ for which
$$
\sum_{v \in V(G)} g(v)=\beta^*(G),
$$such that $g(v) \in\{0,\frac{1}{2},1\}$ for every vertex.
\end{lem}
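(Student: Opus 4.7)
The plan is to treat the fractional transversal problem as a linear program and use the structure of the feasible polytope. Specifically, consider the polytope
\[
\mathcal{T}(G) = \{g\in[0,1]^{V(G)} : g(u)+g(v)\ge 1 \text{ for every } uv\in E(G)\},
\]
and the objective $\min\sum_{v\in V(G)} g(v)$. By LP duality applied to the primal (fractional matching) and dual (fractional transversal) linear programs, the optimal value equals $\beta^*(G)$. Since the feasible region is a bounded polytope and the objective is linear, the minimum is attained at some vertex $g^*$ of $\mathcal{T}(G)$. Hence it suffices to prove that every vertex of $\mathcal{T}(G)$ takes values only in $\{0,\tfrac12,1\}$.

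To establish this structural claim, I would use a direct perturbation argument. Suppose $g$ is a vertex of $\mathcal{T}(G)$ and define
\[
V_+=\{v\in V(G):\tfrac12<g(v)<1\},\qquad V_-=\{v\in V(G):0<g(v)<\tfrac12\}.
\]
If $V_+\cup V_-=\emptyset$, we are done. Otherwise, for a small parameter $\varepsilon>0$, define two perturbations $g^\pm$ by
\[
g^\pm(v)=\begin{cases} g(v)\pm\varepsilon,& v\in V_+,\\ g(v)\mp\varepsilon,& v\in V_-,\\ g(v),&\text{otherwise.}\end{cases}
\]
Then $g=\tfrac12(g^++g^-)$, so once I show that $g^\pm\in\mathcal{T}(G)$ for sufficiently small $\varepsilon$, I contradict that $g$ is a vertex and conclude $V_+\cup V_-=\emptyset$.

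The verification that $g^\pm$ is feasible reduces to checking the edge constraints. For an edge $uv$ I would case-split on which of the sets $V_+$, $V_-$, and the ``stable'' set $V_0=\{v:g(v)\in\{0,\tfrac12,1\}\}$ the endpoints lie in. The case $u,v\in V_-$ is impossible since it would force $g(u)+g(v)<1$, violating feasibility of $g$. The case $u\in V_+$, $v\in V_-$ is automatic because the $\varepsilon$-terms cancel. In all remaining cases, feasibility of $g$ together with $g(v)\in\{0,\tfrac12,1\}$ or $g(v)>\tfrac12$ on the stable side yields strict slack $g(u)+g(v)>1$, so sufficiently small $\varepsilon$ preserves the constraint. (The only subcase needing a short verification is when one endpoint has $g$-value $0$, but then the other must have value $1$, which puts it in $V_0$.)

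The hard part, if anything, is the bookkeeping in the case analysis of the perturbation — one must be careful to check that no constraint $g(u)+g(v)\ge 1$ becomes tight and then is violated, and to handle the box constraints $0\le g\le 1$ by choosing $\varepsilon$ smaller than $\min\{g(v),1-g(v):v\in V_+\cup V_-\}$. Once the perturbation is shown to lie in $\mathcal{T}(G)$, the argument concludes: every vertex of $\mathcal{T}(G)$ has coordinates in $\{0,\tfrac12,1\}$, the LP optimum is attained at such a vertex, and LP duality identifies the optimal value with $\beta^*(G)$, giving the fractional transversal $g$ promised by the lemma.
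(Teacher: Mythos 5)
Your argument is correct, and it is worth noting at the outset that the paper does not prove this lemma at all: it is quoted from Scheinerman and Ullman's book on fractional graph theory, so there is no ``paper proof'' to compare against. What you give is the standard extreme-point proof of half-integrality of the fractional vertex-cover (transversal) polytope, and the case analysis you sketch does close up: the case $u,v\in V_-$ is excluded by feasibility of $g$; the mixed case $u\in V_+$, $v\in V_-$ leaves the constraint value unchanged; an endpoint with $g$-value $0$ forces the other endpoint to have value $1$ and hence to lie in $V_0$; and every remaining case has strict slack, so a sufficiently small $\varepsilon$ (also bounded by the box-constraint margins you specify) keeps both $g^+$ and $g^-$ feasible, whence $g=\tfrac12(g^++g^-)$ with $g^+\neq g^-$ contradicts extremality. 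Three small points you should make explicit to be fully rigorous: (i) $\mathcal{T}(G)$ is nonempty (take $g\equiv 1$) and compact, so the linear objective attains its minimum at a vertex; (ii) the LP dual of the fractional matching program has only the constraints $g\ge 0$ and $g(u)+g(v)\ge 1$, without the upper bound $g\le 1$, so you should observe that truncating any feasible dual solution at $1$ preserves feasibility and does not increase the objective, which identifies the optimum over $\mathcal{T}(G)$ with the dual optimum and hence, by strong duality (both programs being feasible and bounded), with $\beta^*(G)$; (iii) $g^+\neq g^-$ precisely because $V_+\cup V_-\neq\emptyset$, which is what makes the convex-combination contradiction bite. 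With those remarks added, your proof is complete and is essentially the argument one finds in the cited reference.
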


\begin{lem}[\cite{ER}]\label{lem-1-4}
The following are equivalent for a graph $G$.
\begin{enumerate}[(a)]
\vspace{-0.2cm}
\item $G$ has a fractional perfect matching.
\vspace{-0.2cm}
\item There is a partition $\{V_1,\ldots, V_k\}$ of the vertex set $V(G)$ such that, for each $i$, the graph $G[V_i]$ is either $K_2$ or Hamiltonian graph on an odd number of vertices.

\end{enumerate}
\end{lem}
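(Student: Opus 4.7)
The plan is to prove the equivalence by handling each direction separately, with (b)$\Rightarrow$(a) being a direct construction and (a)$\Rightarrow$(b) being a structural decomposition that leverages Lemma \ref{lem-1-2}.

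For (b)$\Rightarrow$(a), I would define a fractional matching $f$ block by block: on each $K_2$-part assign weight $1$ to the unique edge, and on each odd Hamiltonian part fix a Hamiltonian cycle $C_i$ and assign weight $1/2$ to every edge of $C_i$ (and $0$ to the remaining edges of $G[V_i]$). A routine check shows every vertex satisfies $\sum_{e\in\Gamma(v)}f(e)=1$ (either from one weight-$1$ edge or from two weight-$1/2$ edges on the cycle), hence $\sum_{e\in E(G)}f(e)=n/2$ and $f$ is a fractional perfect matching.

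For (a)$\Rightarrow$(b), I would begin by invoking Lemma \ref{lem-1-2} to replace the given fractional perfect matching by a half-integral one, so that $f(e)\in\{0,1/2,1\}$ and $\sum_{e\in E(G)}f(e)=n/2$. Double-counting edge weights through incidences gives $n/2=\tfrac{1}{2}\sum_{v\in V(G)}\sum_{e\in\Gamma(v)}f(e)\le n/2$, so the per-vertex constraint $\sum_{e\in\Gamma(v)}f(e)=1$ must be tight at \emph{every} vertex. I would then split the support of $f$ into two pieces. The weight-$1$ edges form a matching, and each such edge yields a $K_2$-block of the partition. The weight-$1/2$ edges, by the tightness of the vertex constraint, form a $2$-regular subgraph on the remaining vertices, hence a vertex-disjoint union of cycles. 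Each odd cycle $C$ yields a block $V_i=V(C)$: the induced subgraph $G[V_i]$ contains $C$ as a spanning cycle and is therefore Hamiltonian on an odd number of vertices. Each even cycle is cut into consecutive edges, each furnishing another $K_2$-block; together these cover all remaining vertices.

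The main obstacle is the (a)$\Rightarrow$(b) direction. Without the half-integrality guaranteed by Lemma \ref{lem-1-2}, the support of an arbitrary fractional perfect matching need not admit such a clean structure, and one would have to argue separately that any fractional perfect matching can be replaced by one whose support is a disjoint union of a matching and cycles. Granting Lemma \ref{lem-1-2}, the remaining work is essentially bookkeeping: verifying the $2$-regularity of the weight-$1/2$ support via the tightness argument, and resolving even cycles by consecutive pairing.
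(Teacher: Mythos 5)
Your proof is correct. The paper does not prove this lemma itself---it is cited from Scheinerman and Ullman \cite{ER}---but your argument is the standard one: the (b)$\Rightarrow$(a) construction is exactly what the paper's Remark 1 extracts from that proof (weight $1$ on $K_2$-blocks, weight $\tfrac{1}{2}$ on a Hamiltonian cycle of each odd block), and your (a)$\Rightarrow$(b) direction correctly reduces to a half-integral fractional perfect matching via Lemma \ref{lem-1-2}, uses tightness of every vertex constraint to see that the weight-$1$ edges form a matching and the weight-$\tfrac{1}{2}$ edges form a $2$-regular union of cycles, and resolves even cycles into $K_2$-blocks by alternate edges. No gaps.
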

\begin{remark}\label{rem-1}
From the proof of Lemma \ref{lem-1-4}, there is a fractional matching $f$,  such that, if  $G[V_i]\cong K_2$, let $f(e)=1$ where $e\in E(K_2)$. If $G[V_i]$ is a Hamiltonian graph on an odd number of vertices, let $f(e)=\frac{1}{2}$ for every edge $e$ in the Hamilton cycle, and $f(e)=0$, otherwise. It is obvious that $f$ is a fractional perfect matching.
\end{remark}
\section{The spectral radius with  fractional matching number }\label{s-4}

In this section, we first characterize the graph with maximal spectral radius in $\mathcal{G}_{n,\beta^*}$ and $\mathcal{G}^*_{n,\beta^*}$, respectively.
Next, we give a lower bound for the spectral radius of graphs with order $n$ to guarantee the existence of a perfect fractional matching.
Finally, we present a lower bound for the spectral radius which guarantees $\beta(G)\geq \beta+1$, where $\beta~(\beta\leq\frac{n-2}{2})$ is
a positive integer number, which generalizes the main result of O\cite{SO}.
\begin{lem}\label{lem-4-1}
Let $G$ be a  graph with the maximal spectral radius among the graphs with fractional matching number $\beta^*$, then there exist integer
numbers $t$ and $s$, such that $G\cong K_s\vee(K_{n-t-s}\cup tK_1)$, where $t\geq s\geq0$ and $t=n+s-2\beta^*$.
\end{lem}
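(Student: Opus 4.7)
The plan is to extract a fractional transversal from $G$ via Lemma \ref{lem-1-3}, use its $\{0,1/2,1\}$-supports to pin down the shape of $G$, and then read off the identity $t = n + s - 2\beta^{*}$ from the linear relations among the support sizes.

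First I would invoke Lemma \ref{lem-1-3} to choose a fractional transversal $g : V(G) \to \{0, 1/2, 1\}$ with $\sum_{v \in V(G)} g(v) = \beta^{*}$, and partition $V(G) = V_{0} \cup V_{1/2} \cup V_{1}$ according to the $g$-value, with $a = |V_{0}|$, $b = |V_{1/2}|$, $c = |V_{1}|$. The transversal constraint $g(u) + g(v) \geq 1$ on every edge $uv \in E(G)$ forbids exactly two kinds of pairs, namely those inside $V_{0}$ (sum $0$) and those between $V_{0}$ and $V_{1/2}$ (sum $1/2$), while permitting all other pairs.

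Next I would define $G_{\max} := K_{c} \vee (K_{b} \cup \overline{K_{a}})$ on the same vertex set; this is exactly the graph obtained by filling in every allowed pair. Then $G \subseteq G_{\max}$, and since $g$ remains a valid fractional transversal of $G_{\max}$ of value $\beta^{*}$, LP duality between fractional matching and fractional transversal gives $\beta^{*}(G_{\max}) \leq \beta^{*}$; the reverse inequality follows from the inclusion $G \subseteq G_{\max}$, so $G_{\max} \in \mathcal{G}_{n, \beta^{*}}$. By maximality of $\rho(G)$ one obtains $\rho(G) \geq \rho(G_{\max})$, while entrywise monotonicity of the Perron root gives $\rho(G) \leq \rho(G_{\max})$; hence $\rho(G) = \rho(G_{\max})$.

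To upgrade this equality of spectral radii to the equality of graphs $G = G_{\max}$, I would split on whether $c \geq 1$. If $c \geq 1$, then $V_{1}$ is joined to every other vertex of $G_{\max}$, so $G_{\max}$ is connected and Lemma \ref{lem-1-1} forces $G = G_{\max}$. If $c = 0$, then $G_{\max} = K_{b} \cup \overline{K_{a}}$ has $\rho(G_{\max}) = b-1$, and since $G \subseteq G_{\max}$ can only contain edges inside $V_{1/2}$, the equality $\rho(G) = b-1$ forces $G[V_{1/2}] = K_{b}$, again giving $G = G_{\max}$. Setting $s := c$ and $t := a$ then yields $G \cong K_{s} \vee (K_{n-t-s} \cup tK_{1})$ with $n - t - s = b$; combining $b/2 + c = \beta^{*}$ with $a + b + c = n$ gives $t = n + s - 2\beta^{*}$, and the universal bound $\beta^{*} \leq n/2$ produces $t - s = n - 2\beta^{*} \geq 0$, so $t \geq s \geq 0$. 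The main obstacle I anticipate is the disconnected case $c = 0$, where Lemma \ref{lem-1-1} does not apply directly and one must instead use the component-wise spectral argument; otherwise the proof is a clean combination of LP duality for fractional matchings with the monotonicity of the Perron root.
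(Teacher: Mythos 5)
Your proof is correct and follows essentially the same route as the paper's: invoke Lemma \ref{lem-1-3} to get a half-integral fractional transversal, partition the vertices by its values, observe that the transversal condition forbids exactly the edges inside $V_0$ and between $V_0$ and $V_{1/2}$, fill in all allowed edges, and use duality plus monotonicity of the spectral radius to conclude. Your explicit treatment of the $c=0$ case is in fact slightly more careful than the paper, which applies Lemma \ref{lem-1-1} (stated only for connected graphs) without comment there.
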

\begin{proof}
By Lemma \ref{lem-1-3}, for any graph, there exists a fractional transversal number $g$ such that
$\sum_{v \in V(G)} g(v)=\beta^*$, where $g(v) \in\{0,1 / 2,1\}$ for every vertex $v$.
%Choosing one with the greatest number of vertices $v$ with $f(v)=1$.
Define $$W=\{v\in V(G), g(v)=1\};$$  $$R=\{v\in V(G), g(v)=0\};$$ $$C=V(G)-W-R=\{v\in V(G), g(v)=\frac{1}{2}\}.$$
Assume that $|W|=s$, $|R|=t$. Note that, since $g$ is a fractional transversal, $\sum_{v \in e} g(v) \geq 1$, it is easy to see $G$ satisfies the following conditions:
\begin{enumerate}[(a)]
\vspace{-0.3cm}
\item $R$ is an independent set;
\vspace{-0.3cm}
\item there is no edge between $R$ and $C$;
\vspace{-0.3cm}
\item if $G$ is connected, then either $s=t=0$ , or $s,t\neq0$.
\end{enumerate}
 Besides,
\begin{equation}\label{equ-4-111}
\begin{array}{lll}
\beta^*&=&\frac{1}{2}|C|+|W|\\
&=&\frac{1}{2}(n-s-t)+s\\
&=&\frac{1}{2}(n-(t-s)).
\end{array}
\end{equation}
From equation (\ref{equ-4-111}), it follows that $t\geq s$ since $\beta^*(G)\leq\frac{n}{2}$.

Next, we claim $G\cong K_s\vee(K_{n-t-s}\cup tK_1)$. If not, let $\widetilde{G}$ be the graph obtained from $G$
by adding some edges such that $\widetilde{G}\cong K_s\vee(K_{n-t-s}\cup tK_1)$.
According to the minimality of the fractional transversal number, it follows that the fractional transversal number of $\widetilde{G}$ is still $\beta^*(G)$.
By duality, $\beta^*(\widetilde{G})=\beta^*(G)$ since the fractional transversal number equals to the fractional matching number.
And by Lemma \ref{lem-1-1}, it is easy to see $\rho(G)<\rho(\widetilde{G})$, a contradiction. Therefore, $G\cong K_s\vee(K_{n-t-s}\cup tK_1)$.

This completes the proof.
\end{proof}

\begin{thm}\label{thm-4-1}
For any $G\in\mathcal{G}^*_{n,\beta^*}$ , we have \begin{enumerate}[(i)]
\vspace{-0.2cm}
\item  If $n=2\beta^*$,  then $\rho(G)\leq n-1$, with equality if and only if $G\cong K_n$;
\vspace{-0.2cm}
\item If $2\beta^*+1\leq n<3\lceil\beta^*\rceil-3$, then $\rho(G)\leq \theta(n, \beta^*)$, with equality
if and only if $G\cong K_1\vee(K_{2\beta^*-2}\cup(n-2\beta^*+1)K_1)$, where $\theta(n, \beta^*)$ is the largest
root of $x^3-(2\beta^*-3)x^2-(n-1)x-4\beta^{*2}+2\beta^*n+8\beta^*-3n-3=0$;
\vspace{-0.2cm}
\item If $n\geq3\lceil\beta^*\rceil-3$,
then $\rho(G)\leq \frac{\lfloor\beta^*\rfloor-1+\sqrt{(\lfloor\beta^*\rfloor-1)^2+4\lfloor\beta^*\rfloor(n-\lfloor\beta^*\rfloor)}}{2}$,
with equality if and only if $G\cong K_{\lfloor\beta^*\rfloor}\vee\overline{K_{n-\lfloor\beta^*\rfloor}}.$

\end{enumerate}
\end{thm}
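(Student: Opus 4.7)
The proof starts from Lemma \ref{lem-4-1}, which reduces the extremal problem to the one-parameter family $G_s := K_s \vee (K_{2\beta^*-2s} \cup (n+s-2\beta^*)K_1)$, with $s \in \{0, 1, \ldots, \lfloor\beta^*\rfloor\}$ (so $t = n+s-2\beta^*$). Connectedness of $G$ forces either $s = t = 0$ or $s \geq 1$; the former happens only when $n = 2\beta^*$, in which case $G \cong K_n$ and part (i) is immediate. For parts (ii) and (iii), one then needs only to determine, for each pair $(n, \beta^*)$ with $n > 2\beta^*$, which value of $s \in \{1, \ldots, \lfloor\beta^*\rfloor\}$ maximizes $\rho(G_s)$.

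Next I would use the obvious equitable partition of $G_s$ into its three vertex classes (the join-part $K_s$, the clique-part $K_{2\beta^*-2s}$, and the $(n+s-2\beta^*)$ isolated vertices). By Lemma \ref{lem-1-21}, $\rho(G_s)$ is the largest root of a $3 \times 3$ characteristic polynomial $f_s(x)$ whose coefficients are explicit polynomials in $n$, $\beta^*$, $s$. The two endpoints of the range of $s$ exhibit the following key structure: at $s = 1$ one recovers precisely the cubic displayed in (ii) whose largest root is $\theta(n, \beta^*)$, while at $s = \lfloor\beta^*\rfloor$ the clique-part $K_{2\beta^*-2s}$ collapses to $K_0$ or $K_1$, so the quotient matrix is effectively $2 \times 2$ and its largest eigenvalue simplifies to $\frac{\lfloor\beta^*\rfloor-1+\sqrt{(\lfloor\beta^*\rfloor-1)^2 + 4\lfloor\beta^*\rfloor(n-\lfloor\beta^*\rfloor)}}{2}$, matching (iii).

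The main obstacle is to show that on the integer range $s \in \{1, \ldots, \lfloor\beta^*\rfloor\}$ the maximum of $\rho(G_s)$ is attained only at one of these two endpoints, and that the crossover between them occurs precisely at $n = 3\lceil\beta^*\rceil - 3$. My plan is a Perron-vector comparison: letting $\mathbf{x}$ be the Perron vector of $G_{s+1}$, the quadratic form $\mathbf{x}^{T}(A(G_{s+1}) - A(G_s))\mathbf{x}$ gives the sign of $\rho(G_{s+1}) - \rho(G_s)$, which after symbolic simplification reduces to the sign of an auxiliary polynomial in $n$, $\beta^*$, $s$. Alternatively, one can evaluate $f_s(\rho(G_{s'}))$ for $s' \neq s$ and exploit that $\rho(G_s)$ strictly dominates the other two roots of $f_s$ by Perron--Frobenius. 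Either route is expected to show that the extremal value of $s$ is $s = 1$ for small $n$ and $s = \lfloor\beta^*\rfloor$ for large $n$, with the switch located by substituting the closed-form expression for $\rho(G_{\lfloor\beta^*\rfloor})$ into $f_1(x)$ and solving the resulting equation in $n$; this should yield the threshold $n = 3\lceil\beta^*\rceil - 3$. Uniqueness of each extremal graph then follows from Lemma \ref{lem-1-1} combined with the strictness of the preceding inequalities.
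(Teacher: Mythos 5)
Your overall strategy coincides with the paper's: reduce to the one-parameter family $K_s\vee(K_{2\beta^*-2s}\cup(n+s-2\beta^*)K_1)$ via Lemma \ref{lem-4-1}, read off $\rho$ from the $3\times3$ quotient matrix of the equitable partition, identify $s=1$ and $s=\lfloor\beta^*\rfloor$ as the two candidates, and decide between them by a root comparison; your ``alternative'' route of evaluating $f_s$ at the candidate spectral radius is exactly what the paper executes.

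There is, however, a genuine gap in how you propose to close the comparison. Showing $f_s(\rho(G_{s'}))\geq 0$ does \emph{not} by itself give $\rho(G_s)\leq\rho(G_{s'})$: a monic cubic is also nonnegative on $[\theta_3,\theta_2]$, so you must first locate the evaluation point to the right of the second-largest root $\theta_2$ of $f_s$. Perron--Frobenius only tells you $\theta_1>\theta_2$; it says nothing about where $\rho(G_{s'})$ sits relative to $\theta_2$. The paper supplies the missing ingredient by symmetrizing the quotient matrix via $D^{1/2}Q(G^*)D^{-1/2}$ with $D=\mathrm{diag}(s,2\beta^*-2s,n+s-2\beta^*)$ and invoking Cauchy interlacing to get $\theta_2\leq 2\beta^*-2s-1<2\beta^*-2$, while separately checking $\rho(G_1),\rho(G_2)\geq 2\beta^*-2$; without some such bound your sign test is inconclusive. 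Your first route (comparing consecutive $s$ via $\mathbf{x}^T(A(G_{s+1})-A(G_s))\mathbf{x}$) is also shakier than you suggest: since $A(G_{s+1})-A(G_s)$ has entries of both signs, the Rayleigh-quotient argument with the Perron vector of one graph yields only a one-sided inequality, and the claim that $\rho(G_s)$ is maximized at an endpoint of the range of $s$ would require a unimodality statement you do not establish (the paper avoids this entirely by comparing each $s$ directly against the candidate extremal graph). Finally, the actual inequalities --- the case split between integer and half-integer $\beta^*$ and the verification that $f(2\beta^*-2,s)\geq0$ for $s\geq2$ and $f(\rho(G_2),s)>0$ for $s<\lfloor\beta^*\rfloor$, which is where the threshold $n=3\lceil\beta^*\rceil-3$ and the uniqueness claims come from --- are deferred to ``symbolic simplification,'' so the equality analysis in (ii) and (iii) is not yet done.
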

\begin{proof}
For convenience, Let $G_1=K_1\vee(K_{2\beta^*-2}\cup(n-2\beta^*+1)K_1)$ and $G_2=K_{\lfloor\beta^*\rfloor}\vee\overline{K_{n-\lfloor\beta^*\rfloor}}$, respectively.
It is easy to see that for any graph $G$, $\rho(G)\leq\rho(K_n)=n-1$, the equality holds if and only if $G=K_n$. Thus $(i)$ holds.
We now prove that $(ii)$ holds. Let $G^*$ be a graph with maximal spectral radius in $\mathcal{G}^*_{n,\beta^*}$.
Since $G^*$ is connected and $\beta^*(G)\leq\frac{n-1}{2}$, from Lemma \ref{lem-4-1}, there exists a positive integer number $s\ge 1$
such that $G^*\cong K_s\vee(K_{2\beta^*-2s}\cup (n+s-2\beta^*)K_1)$ and $\beta^*\geq s$.
If $s=1$, it is easy to see that $G^*=G_1$.
Considering the vertex partition $\Pi:V(G^*)=\{V(K_s), V(K_{2\beta^*-2s}), V(\overline{K_{n+s-2\beta^*}})\}$, the quotient matrix of $G^*$ is
\[Q(G^*):=\left(\begin{matrix}
s-1&2\beta^*-2s&n+s-2\beta^*\\
s&2\beta^*-2s-1&0\\
s&0&0\\
\end{matrix}\right).\]
The characteristic polynoimal of the matrix $Q(G^*)$ is
\begin{equation}\label{equ-5-1}
\begin{aligned}
f(x, s)=&x^3-(2\beta^*-s-2)x^2+(2\beta^* s-s^2-2\beta^*+s-sn+1)x-4\beta^{*2}s+2\beta^* ns+6\beta^* s^2\\&-2ns^2-2s^3+2s\beta^*-sn-s^2.
\end{aligned}
\end{equation}
Let $\theta_1\geq\theta_2\geq\theta_3$ be three roots of $f(x, s)=0$. By Lemma \ref{lem-1-21}, we have $\theta_1=\rho(G^*)$.
Since $K_{2\beta^*-1}$ is a subgraph of $G_1$, $\rho(G_1)>2\beta^*-2$ by Lemma \ref{lem-1-1}.
Next, we will prove  $\theta_2<2\beta^*-2<\rho(G_1)$ and $\theta_1\le 2\beta^*-2,\ (s\ge 2)$. In fact, let $D=diag(s, 2\beta^*-2s, n+s-2\beta^*)$, then
\[D^{\frac{1}{2}}Q(G^*)D^{-\frac{1}{2}}=\left(\begin{matrix}
s-1&\sqrt{s(2\beta^*-2s)}&\sqrt{s(n+s-2\beta^*)}\\
\sqrt{s(2\beta^*-2s)}&2\beta^*-2s-1&0\\
\sqrt{s(n+s-2\beta^*)}&0&0\\
\end{matrix}\right).\]
Since $D^{\frac{1}{2}}Q(G^*)D^{-\frac{1}{2}}$ and $Q(G^*)$ have the same eigenvalues, the Cauchy interlacing theorem implies that
 $\theta_2\leq 2\beta^*-2s-1<2\beta^*-2<\rho(G_1)$.  Thus, in order to prove $\theta_1\le 2\beta^*-2,\ (s\ge 2)$,
 we only need to prove $f(2\beta^*-2, s)\geq0,\ (s\ge 2)$.

If $\beta^*$ is not an integer number, by Lemma \ref{lem-1-2}, we know $\beta^*=\frac{k}{2}$, where $k$ is an odd number.
Thus $n<3\lceil\beta^*\rceil-3=3(\beta^*+\frac{1}{2})-3=3\beta^*-\frac{3}{2}$, we can get $n\leq3\beta^*-\frac{5}{2}$.
In the following, we need to consider the case of $2\beta^*+1\leq n\leq3\beta^*-\frac{5}{2}$
and $2\leq s\leq\beta^*-\frac{1}{2}$. According to Eq. (\ref{equ-5-1}), we have
\begin{equation}\label{equ-5-2}
\begin{array}{lll}
f(2\beta^*-2,s)&=(-2s^2+s)n-2s^3+(4\beta^*+1)s^2+(4{\beta^*}^2-8\beta^*+2)s\\&-4\beta^{*2}+6\beta^*-2\\
&\geq(4s-4){\beta^*}^2-(2s^2+5s-6)\beta^*+6s^2-\frac{s}{2}-2s^3-2  \quad (for~~n\leq3\beta^*-\frac{5}{2}).\\
\end{array}
\end{equation}
 Since $2s(4s-4)-2s^2-5s+6=6s^2-13s+6>0$ when $s\geq2$, we have
\[
-\frac{(-2s^2-5s+6)}{2(4s-4)}<s<s+\frac{1}{2},
\]
therefore, combining with $\beta^*\geq s+\frac{1}{2}$ and Eq. $(\ref{equ-5-2})$, we obtain
\begin{equation*}
\begin{array}{lll}
f(2\beta^*-2,s)&\geq&(4s-4){\beta^*}^2+(-2s^2-5s+6)\beta^*+6s^2-\frac{s}{2}-2s^3-2\\
&\geq&(4s-4)(s+\frac{1}{2})^2+(-2s^2-5s+6)(s+\frac{1}{2})+6s^2-\frac{s}{2}-2s^3-2\\
&=&0,
\end{array}
\end{equation*} the result follows.

If $\beta^*$ is an integer number, we need to consider the case of $2\beta^*+1\leq n\leq3\beta^*-4$ and $2\leq s\leq\beta^*$.
By similar reasoning as above, we have
\begin{equation*}
\begin{array}{lll}
f(2\beta^*-2,s)&=(-2s^2+s)n-2s^3+(4\beta^*+1)s^2+(4{\beta^*}^2-8\beta^*+2)s\\&-4\beta^{*2}+6\beta^*-2\\
&\geq(4s-4)\beta^{*2}-(2s^2+5s-6)\beta^*-2s^3+9s^2-2s-2 \quad (for~~n\leq3\beta^*-4)\\
&\geq4s-2 \quad (for~~s\leq\beta^*)\\
&>0.
\end{array}
\end{equation*}
Thus $(ii)$ holds.
\\
In the end, we prove that $(iii)$ holds. Considering the vertex partition $\{V(K_{\lfloor\beta^*\rfloor}), V(\overline{K_{n-\lfloor\beta^*\rfloor}})\}$ of $V(G_2)$, we have
\[Q(G_2):=\left(\begin{matrix}
\lfloor\beta^*\rfloor-1&n-\lfloor\beta^*\rfloor\\
\lfloor\beta^*\rfloor&0\\
\end{matrix}\right).\]
The characteristic polynoimal of the matrix $Q(G_2)$ is
\begin{equation}\label{equ-4-11}
g(x)=det(xI_2-Q(G_2))=x^2-(\lfloor\beta^*\rfloor-1)x-\lfloor\beta^*\rfloor(n-\lfloor\beta^*\rfloor).
\end{equation}
According to Lemma \ref{lem-1-21}, the largest root of $Q(G_2)$ is $\rho(G_2)=\frac{\lfloor\beta^*\rfloor-1+\sqrt{(\lfloor\beta^*\rfloor-1)^2+4\lfloor\beta^*\rfloor(n-\lfloor\beta^*\rfloor)}}{2}$ . Besides, $f(\rho(G_2), \lfloor\beta^*\rfloor)=0$.

If $\beta^*$ is an integer number, we  need to consider the case of $n\geq3\beta^*-3$ and $1\leq s\leq\beta^*-1$, since if $s=\lfloor\beta^*\rfloor$, $G^*\cong G_2$. Therefore, we have
\[
\begin{array}{lll}\label{equ-5-3}
\rho(G_2)&=&\frac{\beta^*-1+\sqrt{(\beta^*-1)^2+4\beta^*(n-\beta^*)}}{2}\\
&\geq&\frac{\beta^*-1+\sqrt{9\beta^{*2}-14\beta^*+1}}{2}  \quad (if~~n\geq3\beta^*-3)\\
&\geq&\frac{\beta^*-1+\sqrt{9\beta^{*2}-18\beta^*+9}}{2}  \quad (if~~\beta^*\geq2)\\
&=&2\beta^*-2.
\end{array}
\]
Next, we only need to prove $f(2\beta^*-2, s)\geq0$ when $1\leq s\leq\beta^*-1$. According to Eq.$(\ref{equ-5-1})$, we define
\begin{equation}\label{equ-1}
\begin{array}{lll}
&h(x):=f(x,s)-f(x,\beta^*)\\
=&x^3-(2\beta^*-s-2)x^2+(2\beta^* s-s^2-2\beta+s-sn+1)x-4\beta^*s+2\beta ns+6\beta s^2\\&-2ns^2-2s^3+2s\beta-sn-s^2-(x^3-(\beta-2)x^2+(\beta^{*2}-n\beta^*-\beta^*+1)x+\beta^*-n\beta^*)\\
=&(\beta^*-s)(-x^2+(n+s-\beta^*-1)x+2s^2+(2n-4\beta+1)s+n-\beta^*).\\
\end{array}
\end{equation}
By simple calculation, we have
\begin{equation*}
h(x)=(\beta^*-s)(-g(x)+(n+s-2\beta^*)x+2s^2+(2n-4\beta^*+1)s+(n-\beta^*)(1-\beta^*)).
\end{equation*}
Thus
\begin{equation*}
\begin{split}
&~~~~~f(\rho(G_2),s)\\
&=f(\rho(G_2),s)-f(\rho(G_2),\beta^*)=h(\rho(G_2))\\
&=(\beta^*-s)(-g(\rho(G_2))\!+\!(n\!+\!s\!-\!2\beta^*)\rho(G_2)+2s^2\!+\!(2n-4\beta^*+1)s\!+\!(n-\beta^*)(1\!-\!\beta^*))\\
&=(\beta^*-s)((n+s-2\beta^*)\rho(G_2)+2s^2+(2n-4\beta^*+1)s+(n-\beta^*)(1-\beta^*))\\
&\geq(\beta^*-s)((n+s-2\beta^*)(2\beta^*-2)+2s^2+(2n-4\beta^*+1)s+(n-\beta^*)(1-\beta^*))\\
&=(\beta^*-s)((\beta^*+2s-1)n+(s-2\beta^*)(2\beta^*-2)+2s^2+(1-4\beta^*)s-\beta^*(1-\beta^*))\\
&\geq(\beta^*-s)((4s-3)\beta^*+2s^2-7s+3) \quad (if~~n\geq3\beta^*-3)\\
&\geq(\beta^*-s)(6s^2-6s) \quad (if~~\beta^*\geq s+1)\\
&\geq0.
\end{split}
\end{equation*}
 Note that if $f(\rho(G_2),s)=0$, then all the inequalities become equalities in the above equation. Thus we have if $f(\rho(G_2),s)=0$,
 then $\beta^*=2$ and $s=1$. So, $n=3\beta^*-3=3$, a contradiction with $\beta^*\leq\frac{n}{2}$. Therefore, $f(\rho(G_2),s)>0$.

If $\beta^*$ is not an integer number, then $\lfloor\beta^*\rfloor=\beta^*-\frac{1}{2}$.
In this case, we need to consider $n\geq3\beta^*-\frac{3}{2}$ and $1\leq s\leq\beta^*-\frac{1}{2}$. By similar reasoning as above, we have
 \[
\begin{array}{lll}\label{equ-5-3}
\rho(G_2)&=&\frac{\lfloor\beta^*\rfloor-1+\sqrt{(\lfloor\beta^*\rfloor-1)^2+4\lfloor\beta^*\rfloor(n-\lfloor\beta^*\rfloor)}}{2}\\
&\geq&\frac{\beta^*-1+\sqrt{9\beta^{*2}-8\beta^*+1}}{2}  \quad (if~~n\geq3\beta^*-\frac{3}{2})\\
&>&2\beta^*-2.
\end{array}
\]
And,
\begin{equation}\label{equ-2}
\begin{array}{lll}
&h(x):=f(x,s)-f(x,\beta^*-\frac{1}{2})\\
&=(\beta^*-s-\frac{1}{2})(-g(x)+(n+s-2\beta^*)x+\beta^{*2}-(n+4s+1)\beta^*+(2s+\frac{1}{2})n+2s^2+\frac{1}{4}).
\end{array}
\end{equation}
Thus
\begin{equation*}
\begin{array}{lll}
&~~~~~f(\rho(G_2),s)\\
&=f(\rho(G_2),s)-f(\rho(G_2),\beta^*)=h(\rho(G_2))\\
&=(\beta^*-s-\frac{1}{2})(-g(\rho(G_2))+(n+s-2\beta^*)\rho(G_2)+\beta^{*2}-(n+4s+1)\beta^*+(2s+\frac{1}{2})n+2s^2+\frac{1}{4}\\
&\geq(\beta^*-s-\frac{1}{2})((n+s-2\beta^*)(2\beta^*-2)+\beta^{*2}-(n+4s+1)\beta^*+(2s+\frac{1}{2})n+2s^2+\frac{1}{4})\\
&=(\beta^*-s-\frac{1}{2})((2s+\beta^*-\frac{3}{2})n+2s^2-(2\beta^*+2)s-3\beta^{*2}+3\beta^*+\frac{1}{4})\\
&\geq(\beta^*-s-\frac{1}{2})((4s-3)\beta^*-5s+\frac{5}{2}+2s^2) \quad (if~~n\geq3\beta^*-\frac{3}{2})\\
&\geq(\beta^*-s-\frac{1}{2})(6s^2-6s+1) \quad (if~~\beta^*\geq s+\frac{1}{2})\\
&\geq0,
\end{array}
\end{equation*}
the equality holds if and only if $s=\beta^*-\frac{1}{2}=\lfloor\beta^*\rfloor$, we complete the
proof.
\end{proof}
Next, we characterize the graph with maximal spectral radius in $\mathcal{G}_{n,\beta^*}$ .
\begin{thm}\label{thm-4-2}
Let $G$ be a  graph in $\mathcal{G}_{n,\beta^*}$, then
\begin{enumerate}[(1)]
\vspace{-0.2cm}
\item  If $n=2\beta^*$,  then $\rho(G)\leq n-1$ with equality hold if and only if $G\cong K_n$.
\vspace{-0.2cm}
\item If $2\beta^*+1\leq n<3\lceil\beta^*\rceil-1$, then $\rho(G)\leq 2\beta^*$ with equality hold if and only if $G\cong K_{2\beta^*}\cup(n-2\beta^*)K_1$.
\vspace{-0.2cm}
\item If $n=3\lceil\beta^*\rceil-1$, then $\rho(G)\leq 2\beta^*$ with equality hold if and only if $G\cong K_{\lfloor\beta^*\rfloor}\vee\overline{K_{n-\lfloor\beta^*\rfloor}}$ or $K_{2\beta^*}\cup(n-2\beta^*)K_1$.
\vspace{-0.2cm}
\item If $n>3\lceil\beta^*\rceil-1$, then $\rho(G)\leq \frac{\lfloor\beta^*\rfloor-1+\sqrt{(\lfloor\beta^*\rfloor-1)^2+4\lfloor\beta^*\rfloor(n-\lfloor\beta^*\rfloor)}}{2}$ with equality hold if and only if $G\cong K_{\lfloor\beta^*\rfloor}\vee\overline{K_{n-\lfloor\beta^*\rfloor}}.$
\end{enumerate}
\end{thm}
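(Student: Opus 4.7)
The plan is to follow the same skeleton as the proof of Theorem \ref{thm-4-1}, but now allowing the disconnected candidate that Lemma \ref{lem-4-1} rules in once connectedness is dropped. By Lemma \ref{lem-4-1}, any graph in $\mathcal{G}_{n,\beta^*}$ of maximal spectral radius is isomorphic to $K_s \vee (K_{2\beta^*-2s} \cup (n+s-2\beta^*)K_1)$ for some integer $0 \le s \le \lfloor\beta^*\rfloor$. The new feature compared with Theorem \ref{thm-4-1} is the case $s=0$, which produces $K_{2\beta^*} \cup (n-2\beta^*)K_1$, a disconnected graph that was excluded in $\mathcal{G}^*_{n,\beta^*}$. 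All four cases of the theorem then reduce to comparing this disconnected candidate against the connected candidates $s \ge 1$ already analysed in Theorem \ref{thm-4-1}.

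Case (1) is immediate: when $n = 2\beta^*$ every $G$ is a spanning subgraph of $K_n$, so $\rho(G) \le n-1$ with equality iff $G \cong K_n$.

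Cases (2) and (3) are the core of the argument. Here I would show that for $2\beta^* + 1 \le n \le 3\lceil\beta^*\rceil - 1$, the disconnected candidate $K_{2\beta^*} \cup (n-2\beta^*)K_1$ dominates all the connected candidates with $s \ge 1$. Reusing the cubic $f(x,s)$ from Eq.~(\ref{equ-5-1}) (the characteristic polynomial of the quotient matrix of $K_s \vee (K_{2\beta^*-2s} \cup (n+s-2\beta^*)K_1)$) together with the Cauchy-interlacing bound $\theta_2 \le 2\beta^* - 2s - 1$ derived in the proof of Theorem \ref{thm-4-1}, it suffices to verify the sign of $f(\rho(K_{2\beta^*}), s)$ for $1 \le s \le \lfloor\beta^*\rfloor$ in the relevant window of $n$. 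The equality characterisation in case (3) comes out from the fact that both $s = \lfloor\beta^*\rfloor$ (yielding $K_{\lfloor\beta^*\rfloor}\vee\overline{K_{n-\lfloor\beta^*\rfloor}}$) and $s = 0$ produce graphs with the same spectral radius exactly at the boundary $n = 3\lceil\beta^*\rceil - 1$.

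For case (4), Theorem \ref{thm-4-1}(iii) already identifies $K_{\lfloor\beta^*\rfloor}\vee\overline{K_{n-\lfloor\beta^*\rfloor}}$ as the maximiser among the connected candidates for $n \ge 3\lceil\beta^*\rceil - 3$, with spectral radius given by the quadratic in Eq.~(\ref{equ-4-11}). It then remains to check that this value strictly exceeds $\rho(K_{2\beta^*} \cup (n-2\beta^*)K_1)$ whenever $n > 3\lceil\beta^*\rceil - 1$; this is a direct monotonicity estimate on the closed-form expression, again splitting according to the parity of $2\beta^*$. The main obstacle across the whole proof, as in Theorem \ref{thm-4-1}, is the systematic sign analysis of $f(\cdot, s)$: the algebra must be done so that the inequality becomes tight precisely at $n = 3\lceil\beta^*\rceil - 1$, and the integer and half-integer subcases of $\beta^*$ need to be treated separately in order to pick up both sides of the equality statement in case (3).
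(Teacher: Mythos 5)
Your proposal follows essentially the same route as the paper: reduce via Lemma \ref{lem-4-1} to the family $K_s\vee(K_{2\beta^*-2s}\cup(n+s-2\beta^*)K_1)$ with $s\ge 0$ now admissible, and then compare the disconnected $s=0$ candidate (whose spectral radius is $2\beta^*-1$) against the connected ones through the sign of $f(\,\cdot\,,s)$ evaluated at $2\beta^*-1$ (cases (2)--(3)) and at $\rho(K_{\lfloor\beta^*\rfloor}\vee\overline{K_{n-\lfloor\beta^*\rfloor}})$ (case (4)), splitting on whether $\beta^*$ is an integer or a half-integer. The only work left is the routine sign verification of those polynomial expressions, which the paper carries out exactly as you describe.
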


\begin{proof}
For convenience, we denote  $G_3=K_{2\beta^*}\cup(n-2\beta^*)K_1$. It is easy to see $\rho(G_3)=2\beta^*-1$.

 Let $G^*$ be a graph with maximal spectral radius in $\mathcal{G}_{n,\beta^*}$. By  Lemma \ref{lem-4-1},  there exists an integer number $s$
 such that $G^*\cong K_s\vee(K_{2\beta^*-2s}\cup (n+s-2\beta^*)K_1)$, where $s\geq0$ .\\ $(1)$. Obviously. \\
$(2)$.
According to the proof of Theorem \ref{thm-4-1}, we have $\rho(G_3)=2\beta^*-1>\theta_2$, where
$\theta_2$ is the second largest root of the equation $f(x, s)=0$ (see Eq. (2)).

 Thus, we only need to prove $f(\rho(G_3),s)\!\geq\!0$. By Eq. (\ref{equ-5-1}), if $\beta^*$ is a fractional number, in this case, we need to consider
$2\beta^*+1\leq n<3\beta^*+\frac{1}{2}$,  $0\leq s\leq \beta^*-\frac{1}{2}$. Then we have
\begin{equation*}\label{equ-5-5}
\begin{array}{lll}
&f(\rho(G_3),s)=f(2\beta^*-1, s)\\
=&s(-2sn-2(-2\beta^{*2}-2\beta^*s+s^2+\beta^*)) \\
\geq&s(2\beta^*+s)(2\beta^*-2s-1) \quad (if~~n<3\beta^*+\frac{1}{2})\\
\geq&0 \quad (if~~0\leq s\leq\beta^*-\frac{1}{2}),
\end{array}
\end{equation*}
the equality holds if and only if  $s=0$.

If $\beta^*$ is an integer number, in this case, we need to consider
$2\beta^*+1\leq n<3\beta^*-1$,  $0\leq s\leq \beta^*$. As similar as above proof, by simple calculation, we have $f(\rho(G_3),s)=f(2\beta^*-1, s)\geq0$, and the equality holds if and only if  $s=0$,
 as required. \\
 $(3).$ If $s=\lfloor\beta^*\rfloor$, $G^*\cong G_2$, where $G_2=K_{\lfloor\beta^*\rfloor}\vee\overline{K_{n-\lfloor\beta^*\rfloor}}$ defined in Theorem \ref{thm-4-1}. In this case, by simple calculation,  we have
\[
\begin{array}{lll}\label{equ-5-3}
\rho(G_2)&=&\frac{\lfloor\beta^*\rfloor-1+\sqrt{(\lfloor\beta^*\rfloor-1)^2+4\lfloor\beta^*\rfloor(n-\lfloor\beta^*\rfloor)}}{2}\\
&=&2\beta^*-1=\rho(G_3),
\end{array}
\]
as similar as  the  proof of $(2)$, we have $f(\rho(G_2),s)=f(\rho(G_3),s)=f(2\beta^*-1, s)\geq0$, and the equality holds if and only if  $s=0$ or $s=\lfloor\beta^*\rfloor$, as required. \\
$(4).$  In this case, if $\beta^*$ is an integer number, we only need to consider the case of $n>3\beta^*-1$  and $0\leq s\leq\beta^*-1$. According to Eq. $(\ref{equ-5-1})$,
\[
\begin{array}{lll}\label{equ-5-3}
\rho(G_2)&=&\frac{\beta^*-1+\sqrt{(\beta^*-1)^2+4\beta^*(n-\beta^*)}}{2}\\
&>&\frac{\beta^*-1+\sqrt{9\beta^{*2}-6\beta^*+1}}{2}  \quad (if~~n>3\beta^*-1)\\
&=&2\beta^*-1.
\end{array}
\]
Therefore, according to the proof of Theorem \ref{thm-4-1}$(ii)$, we only need to prove $f(\rho(G_2), s)\geq0$. By similar reasoning as Theorem \ref{thm-4-1}$(iii)$, according to Eq. (\ref{equ-1}), we obtain
\begin{equation*}
\begin{array}{lll}
&f(\rho(G_2),s)\\
=&f(\rho(G_2),s)-f(\rho(G_2),\beta^*)=h(\rho(G_2))\\
=&(\beta^*-s)(-g(\rho(G_2))+(n+s-2\beta^*)\rho(G_2)+2s^2+(2n-4\beta^*+1)s+(n-\beta^*)(1-\beta^*))\\
=&(\beta^*-s)((n+s-2\beta^*)\rho(G_2)+2s^2+(2n-4\beta^*+1)s+(n-\beta^*)(1-\beta^*))\\
>&(\beta^*-s)((n+s-2\beta^*)(2\beta^*-1)+2s^2+(2n-4\beta^*+1)s+(n-\beta^*)(1-\beta^*))\\
>&2s(\beta^*-s)(2\beta^*+s-1) \quad (if~~n>3\beta^*-1)\\
\geq&0,
\end{array}
\end{equation*}
the equality holds if and only if $s=\beta^*$.

If $\beta^*$ is a fractional number, we only need to consider the case of $n>3\beta^*+\frac{1}{2}$  and $0\leq s\leq\beta^*-\frac{3}{2}$.
Thus, according to Eq. (\ref{equ-2}), we have
\begin{equation*}
\begin{array}{lll}
&~~~~~f(\rho(G_2),s)\\
&=f(\rho(G_2),s)-f(\rho(G_2),\beta^*)=h(\rho(G_2))\\
&=(\beta^*-s-\frac{1}{2})(-g(\rho(G_2))+(n+s-2\beta^*)\rho(G_2)+\beta^{*2}-(n+4s+1)\beta^*+(2s+\frac{1}{2})n+2s^2+\frac{1}{4}\\
&>(\beta^*-s-\frac{1}{2})((n+s-2\beta^*)(2\beta^*-1)+\beta^{*2}-(n+4s+1)\beta^*+(2s+\frac{1}{2})n+2s^2+\frac{1}{4})\\
&=(\beta^*-s-\frac{1}{2})((2s+\beta^*-\frac{1}{2})n+2s^2-(2\beta^*+1)s-3\beta^{*2}+\beta^*+\frac{1}{4})\\
&>(\beta^*-s-\frac{1}{2})(4\beta^*s+2s^2) \quad (if~~n>3\beta^*+\frac{1}{2})\\
&>0,
\end{array}
\end{equation*}
the equality holds if and only if $s=\beta^*-\frac{1}{2}=\lfloor\beta^*\rfloor$. The result follows.

\end{proof}

\begin{cor}\label{cor-5}
Let $G$ be a connected graph of order $n~(n\geq3)$. Given a number $\beta^*=\frac{k}{2}$, where $k$ is an integer number and $1\leq k\leq n-1$, then
\begin{enumerate}[(1)]
\vspace{-0.2cm}
\item When  $k$ is even, $\frac{n+3}{3}<\beta^* \leq\frac{n-1}{2}~(n\geq11)$, if $\rho(G)>\theta(n, \beta^*)$, where $\theta(n, \beta^*)$ is the largest root of $x^3-(2\beta^*-3)x^2-(n-1)x-4\beta^{*2}+2\beta^*n+8\beta^*-3n-3=0$, then $\beta^*(G)\geq \beta^*+\frac{1}{2}$.
\vspace{-0.2cm}
\item When $k$ is odd, $\frac{2n+3}{6}<\beta^* \leq\frac{n-1}{2}~(n\geq8~\text{and}~n\neq9)$, if $\rho(G)>\theta(n, \beta^*)$,  then $\beta^*(G)\geq \beta^*+\frac{1}{2}$.
\vspace{-0.2cm}
\item When $\lceil\beta^*\rceil\leq\frac{n+3}{3}$, if $\rho(G)> \frac{\lfloor\beta^*\rfloor-1+\sqrt{(\lfloor\beta^*\rfloor-1)^2+4\lfloor\beta^*\rfloor(n-\lfloor\beta^*\rfloor)}}{2}$, then $\beta^*(G)\geq \beta^*+\frac{1}{2}$.
\end{enumerate}
\end{cor}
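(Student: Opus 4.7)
The plan is to prove each of the three cases by contraposition, with Theorem \ref{thm-4-1} doing the heavy lifting. Assume $\beta^*(G)<\beta^*+\tfrac{1}{2}$; by Lemma \ref{lem-1-2}, $2\beta^*(G)$ is an integer, so this is equivalent to $\beta':=\beta^*(G)\leq\beta^*$. Then $G\in\mathcal{G}^*_{n,\beta'}$, and Theorem \ref{thm-4-1} yields a concrete upper bound on $\rho(G)$ in terms of $n$ and $\beta'$. The task reduces to showing that this bound does not exceed the threshold appearing in the hypothesis of the corollary, contradicting $\rho(G)>$ that threshold.

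For case (3), the hypothesis $\lceil\beta^*\rceil\leq(n+3)/3$ together with $\beta'\leq\beta^*$ gives $n\geq 3\lceil\beta'\rceil-3$, so case (iii) of Theorem \ref{thm-4-1} applies to $G$ and yields $\rho(G)\leq\phi(\lfloor\beta'\rfloor)$, where $\phi(m):=\tfrac{1}{2}\bigl(m-1+\sqrt{(m-1)^2+4m(n-m)}\bigr)$. A short derivative check shows that $\phi$ is strictly increasing on $[1,n/2]$, so $\phi(\lfloor\beta'\rfloor)\leq\phi(\lfloor\beta^*\rfloor)$, contradicting $\rho(G)>\phi(\lfloor\beta^*\rfloor)$. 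For cases (1) and (2), the numerical assumptions on $\beta^*$ translate exactly to $2\beta^*+1\leq n<3\lceil\beta^*\rceil-3$, which is case (ii) of Theorem \ref{thm-4-1}, so the threshold is $\theta(n,\beta^*)$. When $\beta'=\beta^*$ the contradiction is immediate from Theorem \ref{thm-4-1}(ii). It remains to handle $\beta'<\beta^*$, where I split on whether case (ii) or case (iii) of Theorem \ref{thm-4-1} governs the bound for $G$.

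In the case-(ii) subcase, the bound is $\theta(n,\beta')$, and the desired $\theta(n,\beta')<\theta(n,\beta^*)$ follows from Lemma \ref{lem-1-1} after observing that the extremal graph $K_1\vee(K_{2\beta'-2}\cup(n-2\beta'+1)K_1)$ embeds as a proper subgraph of $K_1\vee(K_{2\beta^*-2}\cup(n-2\beta^*+1)K_1)$. In the case-(iii) subcase, the bound is $\phi(\lfloor\beta'\rfloor)$, and I would exhibit $K_{\lfloor\beta'\rfloor}\vee\overline{K_{n-\lfloor\beta'\rfloor}}$ as a proper subgraph of $K_1\vee(K_{2\beta^*-2}\cup(n-2\beta^*+1)K_1)$, using $\lfloor\beta'\rfloor\leq\beta^*-1\leq 2\beta^*-2$ to map the clique of the former into the dominating vertex together with $\lfloor\beta'\rfloor-1$ vertices of the inner clique, and then apply Lemma \ref{lem-1-1}. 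The main obstacle I anticipate is exactly this cross-case subgraph comparison: the extremal graphs have genuinely different shapes across cases (ii) and (iii) of Theorem \ref{thm-4-1}, so the embedding must be verified by hand and shown to be proper even at the boundary value $\lfloor\beta'\rfloor=\beta^*-1$. With these monotonicity facts in place, each part of Corollary \ref{cor-5} closes by contradiction.
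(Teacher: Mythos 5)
Your overall plan (contraposition, apply Theorem \ref{thm-4-1} to $G$ at $\beta':=\beta^*(G)\leq\beta^*$, then argue the resulting bound does not exceed the corollary's threshold) is workable, and case (3) together with your ``case-(ii) governs $G$'' subcase of (1)--(2) go through. But the step you yourself flag as the main obstacle genuinely fails. When $\beta'<\beta^*$ and Theorem \ref{thm-4-1}(iii) governs $G$, you propose to realize $K_{\lfloor\beta'\rfloor}\vee\overline{K_{n-\lfloor\beta'\rfloor}}$ as a proper subgraph of $K_1\vee(K_{2\beta^*-2}\cup(n-2\beta^*+1)K_1)$. This is impossible once $\lfloor\beta'\rfloor\geq2$: the former has $\lfloor\beta'\rfloor$ vertices of degree $n-1$, whereas the latter has exactly one (its inner-clique vertices have degree $2\beta^*-2<n-1$ because $n\geq2\beta^*+1$), and a subgraph on the same vertex set cannot exceed the host's degrees. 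Concretely, after mapping $K_{\lfloor\beta'\rfloor}$ onto the dominating vertex plus $\lfloor\beta'\rfloor-1$ inner-clique vertices, the join edges from those inner-clique vertices to the $n-2\beta^*+1$ pendant vertices are absent from the host. This subcase is nonvacuous (e.g.\ $n=11$, $\beta^*=5$, $\beta'=4$). The inequality you need is still true, but it requires a computation rather than an embedding: using that your $\phi(m)$ is increasing in $m$ and in $n$, one checks that for $m\leq\lfloor\beta^*-\frac{1}{2}\rfloor$ and $n\leq3\lceil\beta^*\rceil-4$ one has $\phi(m)\leq2\beta^*-2$, while $\theta(n,\beta^*)>2\beta^*-2$ because $K_{2\beta^*-1}$ is a proper subgraph of the connected graph $K_1\vee(K_{2\beta^*-2}\cup(n-2\beta^*+1)K_1)$ (Lemma \ref{lem-1-1}).

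The paper sidesteps all of these cross-case threshold comparisons with one move: after first disposing of the trivial case where $G$ has a fractional perfect matching (then $\beta^*(G)=n/2\geq\beta^*+\frac{1}{2}$ automatically), it adds edges to $G$ to obtain $G'\supseteq G$ with $\beta^*(G')$ equal to the target value $\beta^*$ exactly; Lemma \ref{lem-1-1} gives $\rho(G)\leq\rho(G')$, and Theorem \ref{thm-4-1} is then invoked a single time at $\beta^*$ itself, whose case hypotheses are precisely the numerical conditions (1)--(3) of the corollary. Either repair your argument by replacing the embedding with the direct estimate above, or adopt the paper's lift-to-$\beta^*$ device, which makes the monotonicity discussion unnecessary.
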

\begin{proof}
If $G$ has a perfect fractional matching, then $\beta^*(G)=\frac{n}{2}=\frac{n-1}{2}+\frac{1}{2}\geq\beta^*+\frac{1}{2}$, as required.

If $G$ has no perfect fractional matching, we prove it by contradiction. If $\beta^*(G)\leq\beta^*$, then we can construct a graph $G'$,
where $G'$ is obtained from  $G$ by adding edges such that $\beta^*(G')=\beta^*$. It is obvious that $\rho(G')\geq\rho(G)$ by Lemma \ref{lem-1-1}. Combining with Theorem \ref{thm-4-1},
if $2\beta^*+1\leq n<3\lceil\beta^*\rceil-3$, then $\rho(G)\leq\rho(G')\leq \theta(n, \beta^*)$, a contradiction.
If $n\geq3\lceil\beta^*\rceil-3$, then $\rho(G)\leq\rho(G')\leq \frac{\lfloor\beta^*\rfloor-1+\sqrt{(\lfloor\beta^*\rfloor-1)^2+4\lfloor\beta^*\rfloor(n-\lfloor\beta^*\rfloor)}}{2}$, according to Theorem \ref{thm-4-1}, a contradiction.

The result follows.
\end{proof}

In the following, we give a lower bound for the spectral radius of a connected graph to guarantee the existence of a perfect fractional matching and  a perfect matching.
\begin{thm}\label{thm-6}
Let $G$ be a connected graph with order $n$.
\begin{enumerate}[(1)]
\vspace{-0.2cm}
\item  For $n\geq 8$ and $n\neq9$, if $\rho(G)>\theta(n)$, where $\theta(n)$ is the largest root of $x^3-(n-4)x^2-(n-1)x+2(n-4)=0$,
then $G$ has a perfect fractional matching; For $3\leq n\leq7$ or $n=9$, if $\rho(G)>\frac{\lfloor\beta^*\rfloor-1+\sqrt{(\lfloor\beta^*\rfloor-1)^2+4\lfloor\beta^*\rfloor(n-\lfloor\beta^*\rfloor)}}{2}$ where $\beta^*=\frac{n-1}{2}$, then $G$ has a perfect fractional matching.
\vspace{-0.2cm}
\item For $n\geq 8$ and $n$ is even, if $\rho(G)>\theta(n)$, $G$ also has a perfect matching.
\end{enumerate}
\end{thm}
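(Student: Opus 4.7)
The plan is to prove both parts of Theorem \ref{thm-6} by contrapositive, reducing each to a result already established in the paper.

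For part (1), I would suppose that $G$ admits no fractional perfect matching. Lemma \ref{lem-1-2} guarantees that $2\beta^*(G)$ is an integer, so the hypothesis $\beta^*(G) < n/2$ sharpens to $\beta^*(G) \leq \tfrac{n-1}{2}$. Setting $\beta^* := \tfrac{n-1}{2}$ (equivalently $k = n-1$ in the notation of Corollary \ref{cor-5}), I would then invoke Corollary \ref{cor-5}. The key algebraic observation is that substituting $\beta^* = \tfrac{n-1}{2}$ into the cubic $x^3 - (2\beta^* - 3)x^2 - (n-1)x - 4\beta^{*2} + 2\beta^* n + 8\beta^* - 3n - 3$ from Theorem \ref{thm-4-1}(ii) collapses it to exactly the defining polynomial $x^3 - (n-4)x^2 - (n-1)x + 2(n-4)$ of $\theta(n)$. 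Hence $\rho(G) > \theta(n)$ forces $\beta^*(G) \geq \beta^* + \tfrac{1}{2} = \tfrac{n}{2}$ by Corollary \ref{cor-5}, contradicting the assumption.

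The case split in part (1) mirrors the three cases of Corollary \ref{cor-5}: for $n \geq 11$ odd, $k = n-1$ is even and $\tfrac{n+3}{3} < \tfrac{n-1}{2}$, so case (1) applies; for $n \geq 8$ even, $k = n-1$ is odd and $\tfrac{2n+3}{6} < \tfrac{n-1}{2}$, so case (2) applies; and for the remaining values $3 \leq n \leq 7$ or $n = 9$, a direct check gives $\lceil \tfrac{n-1}{2} \rceil \leq \tfrac{n+3}{3}$, so case (3) applies and its threshold matches the second clause of part (1) verbatim.

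For part (2), I would argue contrapositively using the Chen bound (Theorem \ref{thm-2}). If $G$ is connected with $n \geq 8$ even and has no perfect matching, then $\beta(G) \leq \tfrac{n-2}{2}$. Choosing $\beta := \tfrac{n-2}{2}$, I would verify $\beta \geq 3$ and $2\beta + 2 = n \leq \tfrac{3n-8}{2} = 3\beta - 1$, so case (2) of Theorem \ref{thm-2} governs any graph in $\mathcal{G}^*_{n,\beta}$. After an edge-addition step modeled on the proof of Corollary \ref{cor-5} (enlarge $G$ to $G'$ with $\beta(G') = \beta$ without decreasing the spectral radius, noting that adding one edge changes $\beta$ by at most $1$ so the target is reachable), I would conclude $\rho(G) \leq \theta$, where $\theta$ is the largest root of $x^3 - (2\beta-2)x^2 + (1-n)x + 2(\beta-1)(n-2\beta) = 0$. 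Substituting $\beta = \tfrac{n-2}{2}$ simplifies this cubic to the same $x^3 - (n-4)x^2 - (n-1)x + 2(n-4)$, so $\theta = \theta(n)$, and the contrapositive delivers a perfect matching. The main obstacle is really bookkeeping rather than ingenuity: one must verify the two polynomial simplifications (both collapsing neatly to the cubic defining $\theta(n)$) and track which of the three cases of Corollary \ref{cor-5}, and which case of Theorem \ref{thm-2}, applies in each range of $n$.
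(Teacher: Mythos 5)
Your proof of part (1) is correct and coincides with the paper's: both set $\beta^*=\tfrac{n-1}{2}$, apply Corollary \ref{cor-5}, and use that $2\beta^*(G)$ is an integer; your case bookkeeping (case (1) of the corollary for odd $n\ge 11$, case (2) for even $n\ge 8$, case (3) for $3\le n\le 7$ or $n=9$) and the collapse of the cubic to $x^3-(n-4)x^2-(n-1)x+2(n-4)$ are exactly what the paper relies on. For part (2), however, you take a genuinely different route. You argue contrapositively through Chen's bound (Theorem \ref{thm-2}) for connected graphs with matching number $\beta=\tfrac{n-2}{2}$, after an edge-addition step to normalize the matching number; this is sound (the range check $2\beta+2=n\le 3\beta-1$ for $n\ge 8$ and the polynomial substitution both work out), and it is in fact the argument the paper itself uses later for Theorem \ref{thm-4} and the second derivation of Corollary \ref{cor3-2}. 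The paper's own proof of Theorem \ref{thm-6}(2) instead deduces the perfect matching from the fractional perfect matching already obtained in part (1): it assumes every fractional perfect matching uses weight $\tfrac12$, takes a spectral-radius-maximal counterexample, uses Lemma \ref{lem-1-4} and Remark \ref{rem-1} to reduce its structure to $K_1\vee(2K_3\cup K_{n-7})$ (via the auxiliary graphs $H_1,H_2,H_3$ and rerouting of half-weights), and kills this candidate with a Rayleigh-quotient perturbation. Your route is shorter and cleaner but outsources the extremal work to the cited Theorem \ref{thm-2}; the paper's route stays inside the fractional-matching framework and additionally exhibits how, under the spectral hypothesis, a half-integral perfect fractional matching can always be rounded to an integral one. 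Both are valid proofs of the stated result.
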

\begin{proof}
By Corollary \ref{cor-5}, let $\beta^*=\frac{n-1}{2}$, and combining with the fact  $\beta^*(G)\leq\frac{n}{2}$, we have $(1)$ holds.
In the following, we prove that $(2)$ holds. For convenience, let $G_4=K_1\vee(K_{n-3}\cup 2K_1)$, by the proof of Theorem \ref{thm-4-1}, we know $\rho(G_4)=\theta(n)$.

From Lemma \ref{lem-1-2} and $(1)$, we know $G$ has a fractional perfect matching $f$
for which  $\sum_{e \in E(G)} f(e)=\frac{n}{2}$ such that $f(e)\in\{0, \frac{1}{2}, 1\}$ for every edge $e\in E(G)$.

Now we only need to show there exists some fractional perfect matching $f$  such that  $|\{e: \ f(e)=\frac{1}{2}\}|=0$. We prove it by contradiction.
If not, then for any fractional perfect matching of $G$, there are some edges $e\in E(G)$ with $f(e)=\frac{1}{2}$. Let $\mathcal{H}$ be the set of all connected graphs of even order $n$ with fractional perfect matching, for any $G\in\mathcal{H}$, $\rho(G)>\theta(n)$, and for any fractional perfect matching $f$ of $G$ with $\{e: \ f(e)=\frac{1}{2}\}\neq\emptyset$.
Assume $G^*$ is the graph with maximal spectral radius in $\mathcal{H}$. According to Lemma \ref{lem-1-4}
and Remark \ref{rem-1}, we know there exists a partition $\{V_1,\ldots, V_s\}$ of the vertex set $V(G^*)$ such that
 $G^*[V_i]$ $(1\leq i\leq k)$ is a Hamiltonian graph on an odd number of vertices, and $G^*[V_i]\cong K_2~(k+1\leq i\leq s)$. Besides, $f(e)=1$ for $e\in E(K_2)$;
 $f(e)=\frac{1}{2}$ for every edge $e$ in the Hamilton odd cycle and $f(e)=0$ otherwise. Note that, $n$ is an even number, thus $\beta^*(G^*)$ is an integer number and $k$ is even. Since $G^*$ has the maximal spectral radius, we claim
\begin{enumerate}[(a)]
\vspace{-0.2cm}
\item  $G^*[V_1]=G^*[V_2]=K_3$;
\vspace{-0.2cm}
\item the graph induced by $V(\cup_{i=3}^sG^*[V_i])$ is a complete graph.
\vspace{-0.2cm}
\end{enumerate}
If $(a)$ and $(b)$ do not hold, let $\widetilde{G}$ be the graph obtained from $G^*$ by adding edges such that $(a)$ and $(b)$ hold.
Then the resulting graph $\widetilde{G}$ has a partition $\{V_1, V_2, V_3\}$ such that $\widetilde{G}[V_1]=\widetilde{G}[V_2]=K_3$, $\widetilde{G}[V_3]=K_{n-6}$.
 Since $n-6$ is even, we can choose $\frac{n-6}{2}K_2$ from $\widetilde{G}[V_3]$ such that $f'(e)=1$;
 $f'(e)=\frac{1}{2}$ for every edge $e\in E(\widetilde{G}[V_1]\cup \widetilde{G}[V_2])$ and $f'(e)=0$ for other edges. It is easy to see
$\sum_{e \in E(\widetilde{G})} f'(e)=\frac{n}{2}$, and $\rho(\widetilde{G})>\rho(G^*)$, a contradiction. Thus, $G^*$ satisfies the conditions $(a)$ and $(b)$.
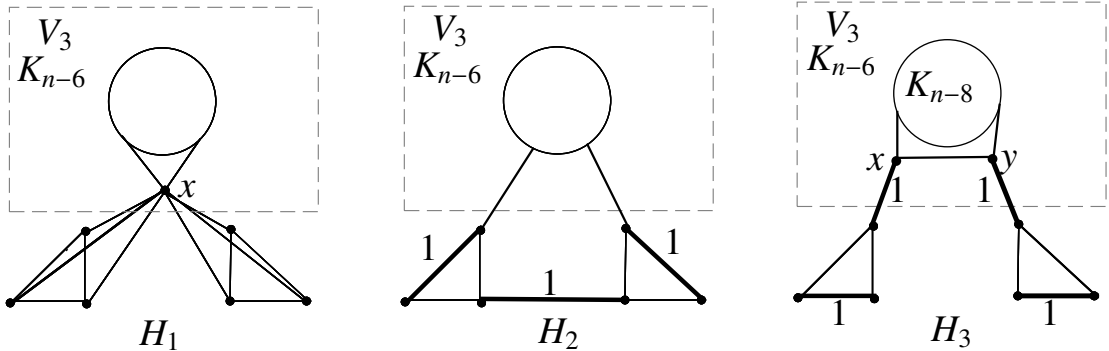
\begin{figure}[htbp]
\centering
\begin{tikzpicture}[x=1.00mm, y=1.00mm, inner xsep=0pt, inner ysep=0pt, outer xsep=0pt, outer ysep=0pt,scale=0.5]
\path[line width=0mm] (10.09,17.24) rectangle +(296.36,107.04);
\definecolor{L}{rgb}{0,0,0}
\definecolor{F}{rgb}{0,0,0}
\path[line width=0.18mm, draw=L, fill=F] (33.17,50.45) circle (1.00mm);
\path[line width=0.21mm, draw=L, fill=F] (54.26,61.37) circle (1.00mm);
\path[line width=0.18mm, draw=L, fill=F] (13.09,31.39) circle (1.00mm);
\path[line width=0.18mm, draw=L, fill=F] (33.42,31.13) circle (1.00mm);
\path[line width=0.21mm, draw=L] (54.26,61.37) -- (33.42,50.19);
\path[line width=0.21mm, draw=L] (54.26,61.12) -- (33.17,30.37);
\path[line width=0.30mm, draw=L] (53.75,61.37) -- (13.34,31.13);
\path[line width=0.18mm, draw=L] (13.34,31.89) -- (33.93,31.89);
\path[line width=0.18mm, draw=L, fill=F] (71.79,50.96) circle (1.00mm);
\path[line width=0.18mm, draw=L, fill=F] (71.54,31.89) circle (1.00mm);
\path[line width=0.18mm, draw=L, fill=F] (91.87,31.89) circle (1.00mm);
\path[line width=0.21mm, draw=L] (53.75,60.87) -- (71.54,50.70);
\path[line width=0.18mm, draw=L] (71.79,31.89) -- (93.14,31.89);
\path[line width=0.18mm, draw=L] (71.79,50.96) -- (92.38,31.89);
\path[line width=0.18mm, draw=L] (71.79,51.21) -- (71.54,31.64);
\path[line width=0.18mm, draw=L] (32.91,51.21) -- (32.91,30.12);
\path[line width=0.21mm, draw=L] (53.75,61.63) -- (71.03,32.66);
\path[line width=0.21mm, draw=L] (54.00,61.63) -- (92.13,31.39);
\path[line width=0.18mm, draw=L] (53.50,85.01) circle (14.25mm);
\path[line width=0.21mm, draw=L] (54.26,61.37) -- (63.66,75.10);
\path[line width=0.21mm, draw=L] (54.00,61.63) -- (42.82,75.86);
\path[line width=0.18mm, draw=L, fill=F] (33.17,50.45) circle (1.00mm);
\path[line width=0.21mm, draw=L, fill=F] (54.26,61.37) circle (1.00mm);
\path[line width=0.18mm, draw=L, fill=F] (13.09,31.39) circle (1.00mm);
\path[line width=0.18mm, draw=L, fill=F] (33.42,31.13) circle (1.00mm);
\path[line width=0.21mm, draw=L] (54.26,61.37) -- (33.42,50.19);
\path[line width=0.21mm, draw=L] (54.26,61.12) -- (33.17,30.37);
\path[line width=0.30mm, draw=L] (53.75,61.37) -- (13.34,31.13);
\path[line width=0.18mm, draw=L] (13.34,31.89) -- (33.93,31.89);
\path[line width=0.18mm, draw=L, fill=F] (71.79,50.96) circle (1.00mm);
\path[line width=0.18mm, draw=L, fill=F] (71.54,31.89) circle (1.00mm);
\path[line width=0.18mm, draw=L, fill=F] (91.87,31.89) circle (1.00mm);
\path[line width=0.21mm, draw=L] (53.75,60.87) -- (71.54,50.70);
\path[line width=0.18mm, draw=L] (71.79,31.89) -- (93.14,31.89);
\path[line width=0.18mm, draw=L] (71.79,50.96) -- (92.38,31.89);
\path[line width=0.18mm, draw=L] (71.79,51.21) -- (71.54,31.64);
\path[line width=0.18mm, draw=L] (32.91,51.21) -- (32.91,30.12);
\path[line width=0.21mm, draw=L] (53.75,61.63) -- (71.03,32.66);
\path[line width=0.21mm, draw=L] (54.00,61.63) -- (92.13,31.39);
\path[line width=0.18mm, draw=L] (53.50,85.01) circle (14.25mm);
\path[line width=0.21mm, draw=L] (54.26,61.37) -- (63.66,75.10);
\path[line width=0.21mm, draw=L] (54.00,61.63) -- (42.82,75.86);
\path[line width=0.18mm, draw=L, fill=F] (33.17,50.45) circle (1.00mm);
\path[line width=0.21mm, draw=L, fill=F] (54.26,61.37) circle (1.00mm);
\path[line width=0.18mm, draw=L, fill=F] (13.09,31.39) circle (1.00mm);
\path[line width=0.18mm, draw=L, fill=F] (33.42,31.13) circle (1.00mm);
\path[line width=0.21mm, draw=L] (54.26,61.37) -- (33.42,50.19);
\path[line width=0.21mm, draw=L] (54.26,61.12) -- (33.17,30.37);
\path[line width=0.30mm, draw=L] (53.75,61.37) -- (13.34,31.13);
\path[line width=0.18mm, draw=L] (13.34,31.89) -- (33.93,31.89);
\path[line width=0.18mm, draw=L, fill=F] (71.79,50.96) circle (1.00mm);
\path[line width=0.18mm, draw=L, fill=F] (71.54,31.89) circle (1.00mm);
\path[line width=0.18mm, draw=L, fill=F] (91.87,31.89) circle (1.00mm);
\path[line width=0.21mm, draw=L] (53.75,60.87) -- (71.54,50.70);
\path[line width=0.18mm, draw=L] (71.79,31.89) -- (93.14,31.89);
\path[line width=0.18mm, draw=L] (71.79,50.96) -- (92.38,31.89);
\path[line width=0.18mm, draw=L] (71.79,51.21) -- (71.54,31.64);
\path[line width=0.18mm, draw=L] (32.91,51.21) -- (32.91,30.12);
\path[line width=0.21mm, draw=L] (53.75,61.63) -- (71.03,32.66);
\path[line width=0.21mm, draw=L] (54.00,61.63) -- (92.13,31.39);
\path[line width=0.18mm, draw=L] (53.50,85.01) circle (14.25mm);
\path[line width=0.21mm, draw=L] (54.26,61.37) -- (63.66,75.10);
\path[line width=0.21mm, draw=L] (54.00,61.63) -- (42.82,75.86);
\path[line width=0.18mm, draw=L, fill=F] (33.17,50.45) circle (1.00mm);
\path[line width=0.21mm, draw=L, fill=F] (54.26,61.37) circle (1.00mm);
\path[line width=0.18mm, draw=L, fill=F] (13.09,31.39) circle (1.00mm);
\path[line width=0.18mm, draw=L, fill=F] (33.42,31.13) circle (1.00mm);
\path[line width=0.21mm, draw=L] (54.26,61.37) -- (33.42,50.19);
\path[line width=0.21mm, draw=L] (54.26,61.12) -- (33.17,30.37);
\path[line width=0.30mm, draw=L] (53.75,61.37) -- (13.34,31.13);
\path[line width=0.18mm, draw=L] (13.34,31.89) -- (33.93,31.89);
\path[line width=0.18mm, draw=L, fill=F] (71.79,50.96) circle (1.00mm);
\path[line width=0.18mm, draw=L, fill=F] (71.54,31.89) circle (1.00mm);
\path[line width=0.18mm, draw=L, fill=F] (91.87,31.89) circle (1.00mm);
\path[line width=0.21mm, draw=L] (53.75,60.87) -- (71.54,50.70);
\path[line width=0.18mm, draw=L] (71.79,31.89) -- (93.14,31.89);
\path[line width=0.18mm, draw=L] (71.79,50.96) -- (92.38,31.89);
\path[line width=0.18mm, draw=L] (71.79,51.21) -- (71.54,31.64);
\path[line width=0.18mm, draw=L] (32.91,51.21) -- (32.91,30.12);
\path[line width=0.21mm, draw=L] (53.75,61.63) -- (71.03,32.66);
\path[line width=0.21mm, draw=L] (54.00,61.63) -- (92.13,31.39);
\path[line width=0.18mm, draw=L] (53.50,85.01) circle (14.25mm);
\path[line width=0.21mm, draw=L] (54.26,61.37) -- (63.66,75.10);
\path[line width=0.21mm, draw=L] (54.00,61.63) -- (42.82,75.86);
\definecolor{L}{rgb}{0.502,0.502,0.502}
\path[line width=0.15mm, draw=L, dash pattern=on 2.00mm off 1.00mm] (12.87,55.68) rectangle +(82.13,54.37);
\draw(20.01,100.23) node[anchor=base west]{\fontsize{14.23}{17.07}\selectfont $V_3$};
\definecolor{L}{rgb}{0,0,0}
\path[line width=0.18mm, draw=L, fill=F] (138.17,50.72) circle (1.00mm);
\path[line width=0.18mm, draw=L, fill=F] (118.09,31.66) circle (1.00mm);
\path[line width=0.18mm, draw=L, fill=F] (138.42,31.41) circle (1.00mm);
\path[line width=0.18mm, draw=L] (138.67,51.74) -- (118.09,31.15);
\path[line width=0.18mm, draw=L] (118.34,32.17) -- (138.93,32.17);
\path[line width=0.18mm, draw=L, fill=F] (176.79,51.23) circle (1.00mm);
\path[line width=0.18mm, draw=L, fill=F] (176.54,32.17) circle (1.00mm);
\path[line width=0.18mm, draw=L, fill=F] (196.87,32.17) circle (1.00mm);
\path[line width=0.18mm, draw=L] (176.79,32.17) -- (198.14,32.17);
\path[line width=0.18mm, draw=L] (176.79,51.23) -- (197.38,32.17);
\path[line width=0.18mm, draw=L] (176.79,51.48) -- (176.54,31.92);
\path[line width=0.18mm, draw=L] (137.91,51.48) -- (137.91,30.39);
\path[line width=0.18mm, draw=L] (158.50,85.28) circle (14.25mm);
\path[line width=0.18mm, draw=L, fill=F] (138.17,50.72) circle (1.00mm);
\path[line width=0.18mm, draw=L, fill=F] (118.09,31.66) circle (1.00mm);
\path[line width=0.18mm, draw=L, fill=F] (138.42,31.41) circle (1.00mm);
\path[line width=0.18mm, draw=L] (138.67,51.74) -- (118.09,31.15);
\path[line width=0.18mm, draw=L] (118.34,32.17) -- (138.93,32.17);
\path[line width=0.18mm, draw=L, fill=F] (176.79,51.23) circle (1.00mm);
\path[line width=0.18mm, draw=L, fill=F] (176.54,32.17) circle (1.00mm);
\path[line width=0.18mm, draw=L, fill=F] (196.87,32.17) circle (1.00mm);
\path[line width=0.18mm, draw=L] (176.79,32.17) -- (198.14,32.17);
\path[line width=0.18mm, draw=L] (176.79,51.23) -- (197.38,32.17);
\path[line width=0.18mm, draw=L] (176.79,51.48) -- (176.54,31.92);
\path[line width=0.18mm, draw=L] (137.91,51.48) -- (137.91,30.39);
\path[line width=0.18mm, draw=L] (158.50,85.28) circle (14.25mm);
\path[line width=0.18mm, draw=L, fill=F] (138.17,50.72) circle (1.00mm);
\path[line width=0.18mm, draw=L, fill=F] (118.09,31.66) circle (1.00mm);
\path[line width=0.18mm, draw=L, fill=F] (138.42,31.41) circle (1.00mm);
\path[line width=0.18mm, draw=L] (138.67,51.74) -- (118.09,31.15);
\path[line width=0.18mm, draw=L] (118.34,32.17) -- (138.93,32.17);
\path[line width=0.18mm, draw=L, fill=F] (176.79,51.23) circle (1.00mm);
\path[line width=0.18mm, draw=L, fill=F] (176.54,32.17) circle (1.00mm);
\path[line width=0.18mm, draw=L, fill=F] (196.87,32.17) circle (1.00mm);
\path[line width=0.18mm, draw=L] (176.79,32.17) -- (198.14,32.17);
\path[line width=0.18mm, draw=L] (176.79,51.23) -- (197.38,32.17);
\path[line width=0.18mm, draw=L] (176.79,51.48) -- (176.54,31.92);
\path[line width=0.18mm, draw=L] (137.91,51.48) -- (137.91,30.39);
\path[line width=0.18mm, draw=L] (158.50,85.28) circle (14.25mm);
\path[line width=0.18mm, draw=L, fill=F] (138.17,50.72) circle (1.00mm);
\path[line width=0.18mm, draw=L, fill=F] (118.09,31.66) circle (1.00mm);
\path[line width=0.18mm, draw=L, fill=F] (138.42,31.41) circle (1.00mm);
\path[line width=0.60mm, draw=L] (138.67,51.74) -- (118.09,31.15);
\path[line width=0.18mm, draw=L] (118.34,32.17) -- (138.93,32.17);
\path[line width=0.18mm, draw=L, fill=F] (176.79,51.23) circle (1.00mm);
\path[line width=0.18mm, draw=L, fill=F] (176.54,32.17) circle (1.00mm);
\path[line width=0.18mm, draw=L, fill=F] (196.87,32.17) circle (1.00mm);
\path[line width=0.18mm, draw=L] (176.79,32.17) -- (198.14,32.17);
\path[line width=0.60mm, draw=L] (176.79,51.23) -- (197.38,32.17);
\path[line width=0.18mm, draw=L] (176.79,51.48) -- (176.54,31.92);
\path[line width=0.18mm, draw=L] (137.91,51.48) -- (137.91,30.39);
\path[line width=0.18mm, draw=L] (262.25,87.19) circle (14.25mm);
\definecolor{L}{rgb}{0.502,0.502,0.502}
\path[line width=0.15mm, draw=L, dash pattern=on 2.00mm off 1.00mm] (117.87,55.96) rectangle +(82.13,54.37);
\draw(125.01,101.33) node[anchor=base west]{\fontsize{14.23}{17.07}\selectfont $V_3$};
\definecolor{L}{rgb}{0,0,0}
\path[line width=0.60mm, draw=L] (137.88,32.47) -- (176.91,32.20);
\draw(14.66,90.47) node[anchor=base west]{\fontsize{14.23}{17.07}\selectfont $K_{n-6}$};
\draw(119.93,91.57) node[anchor=base west]{\fontsize{14.23}{17.07}\selectfont $K_{n-6}$};
\draw(47.18,20.32) node[anchor=base west]{\fontsize{14.23}{17.07}\selectfont $H_1$};
\draw(153.28,20.60) node[anchor=base west]{\fontsize{14.23}{17.07}\selectfont $H_2$};
\path[line width=0.30mm, draw=L] (208.59,122.28);
\path[line width=0.18mm, draw=L, fill=F] (242.62,51.82) circle (1.00mm);
\path[line width=0.18mm, draw=L, fill=F] (222.54,32.76) circle (1.00mm);
\path[line width=0.18mm, draw=L, fill=F] (242.87,32.51) circle (1.00mm);
\path[line width=0.18mm, draw=L] (243.12,52.84) -- (222.54,32.25);
\path[line width=0.18mm, draw=L] (222.79,33.27) -- (243.38,33.27);
\path[line width=0.18mm, draw=L, fill=F] (281.24,52.33) circle (1.00mm);
\path[line width=0.18mm, draw=L, fill=F] (280.99,33.27) circle (1.00mm);
\path[line width=0.18mm, draw=L, fill=F] (301.32,33.27) circle (1.00mm);
\path[line width=0.18mm, draw=L] (281.24,33.27) -- (302.59,33.27);
\path[line width=0.18mm, draw=L] (281.24,52.33) -- (301.83,33.27);
\path[line width=0.18mm, draw=L] (281.24,52.58) -- (280.99,33.01);
\path[line width=0.18mm, draw=L] (242.36,52.58) -- (242.36,31.49);
\path[line width=0.18mm, draw=L, fill=F] (242.62,51.82) circle (1.00mm);
\path[line width=0.18mm, draw=L, fill=F] (222.54,32.76) circle (1.00mm);
\path[line width=0.18mm, draw=L, fill=F] (242.87,32.51) circle (1.00mm);
\path[line width=0.18mm, draw=L] (243.12,52.84) -- (222.54,32.25);
\path[line width=0.18mm, draw=L] (222.79,33.27) -- (243.38,33.27);
\path[line width=0.18mm, draw=L, fill=F] (281.24,52.33) circle (1.00mm);
\path[line width=0.18mm, draw=L, fill=F] (280.99,33.27) circle (1.00mm);
\path[line width=0.18mm, draw=L, fill=F] (301.32,33.27) circle (1.00mm);
\path[line width=0.18mm, draw=L] (281.24,33.27) -- (302.59,33.27);
\path[line width=0.18mm, draw=L] (281.24,52.33) -- (301.83,33.27);
\path[line width=0.18mm, draw=L] (281.24,52.58) -- (280.99,33.01);
\path[line width=0.18mm, draw=L] (242.36,52.58) -- (242.36,31.49);
\path[line width=0.18mm, draw=L, fill=F] (242.62,51.82) circle (1.00mm);
\path[line width=0.18mm, draw=L, fill=F] (222.54,32.76) circle (1.00mm);
\path[line width=0.18mm, draw=L, fill=F] (242.87,32.51) circle (1.00mm);
\path[line width=0.18mm, draw=L] (243.12,52.84) -- (222.54,32.25);
\path[line width=0.18mm, draw=L] (222.79,33.27) -- (243.38,33.27);
\path[line width=0.18mm, draw=L, fill=F] (281.24,52.33) circle (1.00mm);
\path[line width=0.18mm, draw=L, fill=F] (280.99,33.27) circle (1.00mm);
\path[line width=0.18mm, draw=L, fill=F] (301.32,33.27) circle (1.00mm);
\path[line width=0.18mm, draw=L] (281.24,33.27) -- (302.59,33.27);
\path[line width=0.18mm, draw=L] (281.24,52.33) -- (301.83,33.27);
\path[line width=0.18mm, draw=L] (281.24,52.58) -- (280.99,33.01);
\path[line width=0.18mm, draw=L] (242.36,52.58) -- (242.36,31.49);
\path[line width=0.18mm, draw=L, fill=F] (242.62,51.82) circle (1.00mm);
\path[line width=0.18mm, draw=L, fill=F] (222.54,32.76) circle (1.00mm);
\path[line width=0.18mm, draw=L, fill=F] (242.87,32.51) circle (1.00mm);
\path[line width=0.30mm, draw=L] (243.12,52.84) -- (222.54,32.25);
\path[line width=0.60mm, draw=L] (222.79,33.27) -- (243.38,33.27);
\path[line width=0.18mm, draw=L, fill=F] (281.24,52.33) circle (1.00mm);
\path[line width=0.18mm, draw=L, fill=F] (280.99,33.27) circle (1.00mm);
\path[line width=0.18mm, draw=L, fill=F] (301.32,33.27) circle (1.00mm);
\path[line width=0.66mm, draw=L] (281.24,33.27) -- (302.59,33.27);
\path[line width=0.30mm, draw=L] (281.24,52.33) -- (301.83,33.27);
\path[line width=0.18mm, draw=L] (281.24,52.58) -- (280.99,33.01);
\path[line width=0.18mm, draw=L] (242.36,52.58) -- (242.36,31.49);
\definecolor{L}{rgb}{0.502,0.502,0.502}
\path[line width=0.15mm, draw=L, dash pattern=on 2.00mm off 1.00mm] (222.32,57.06) rectangle +(82.13,54.37);
\draw(229.46,102.43) node[anchor=base west]{\fontsize{14.23}{17.07}\selectfont $V_3$};
\draw(224.66,93.48) node[anchor=base west]{\fontsize{14.23}{17.07}\selectfont $K_{n-6}$};
\draw(257.73,21.16) node[anchor=base west]{\fontsize{14.23}{17.07}\selectfont $H_3$};
\definecolor{L}{rgb}{0,0,0}
\path[line width=0.60mm, draw=L] (248.68,69.71) -- (241.85,51.09);
\draw(57.41,59.61) node[anchor=base west]{\fontsize{14.23}{17.07}\selectfont $x$};
\path[line width=0.30mm, draw=L] (28.00,45.60) -- (28.27,45.32);
\path[line width=0.30mm, draw=L] (32.67,50.54) -- (13.70,31.85);
\path[line width=0.66mm, draw=L] (274.00,69.98) -- (281.35,51.73);
\path[line width=0.30mm, draw=L] (138.66,51.46) -- (152.28,72.70);
\path[line width=0.30mm, draw=L] (177.33,51.73) -- (166.44,73.79);
\path[line width=0.30mm, draw=L, fill=F] (248.68,69.16) circle (1.00mm);
\path[line width=0.30mm, draw=L, fill=F] (274.27,69.71) circle (1.00mm);
\path[line width=0.30mm, draw=L] (248.40,69.98) -- (274.55,70.25);
\path[line width=0.30mm, draw=L] (248.95,70.25) -- (248.95,82.51);
\path[line width=0.30mm, draw=L] (274.55,70.25) -- (276.18,84.41);
\draw(240.51,66.71) node[anchor=base west]{\fontsize{14.23}{17.07}\selectfont $x$};
\draw(276.45,67.53) node[anchor=base west]{\fontsize{14.23}{17.07}\selectfont $y$};
\draw(250.80,86.40) node[anchor=base west]{\fontsize{14.23}{17.07}\selectfont $K_{n-8}$};
\draw(121.78,42.75) node[anchor=base west]{\fontsize{14.23}{17.07}\selectfont $1$};
\draw(154.18,33.49) node[anchor=base west]{\fontsize{14.23}{17.07}\selectfont $1$};
\draw(186.86,44.38) node[anchor=base west]{\fontsize{14.23}{17.07}\selectfont $1$};
\draw(246.50,58.27) node[anchor=base west]{\fontsize{14.23}{17.07}\selectfont $1$};
\draw(269.64,58.27) node[anchor=base west]{\fontsize{14.23}{17.07}\selectfont $1$};
\draw(230.98,25.5) node[anchor=base west]{\fontsize{14.23}{17.07}\selectfont $1$};
\draw(287.07,25.5) node[anchor=base west]{\fontsize{14.23}{17.07}\selectfont $1$};
\end{tikzpicture}%
\caption{ $H_1,~H_2~~and~~H_3$.}\label{fig-1}
\end{figure}

From above, we know $G^*$ has a partition $\{V_1, V_2, V_3\}$
such that $G^*[V_1]=G^*[V_2]=K_3$, $G^*[V_3]=K_{n-6}$, next, we claim there is no edge between $G^*[V_1]$ and $G^*[V_2]$. If not, since $G^*[V_3]=K_{n-6}$, we can choose $\frac{n-6}{2}K_2$ as matching edges from $K_{n-6}$ such that $f''(e)=1$, and by replacing weight $\frac{1}{2}$ on thick edges with $1$, and other edges with $f''(e)=0$ (see Fig. 1 $H_2$),
 we find a fractional perfect matching $f''$ with  $|\{e: f''(e)=\frac{1}{2}\}|=0$, a contradiction.
Let $H_1 \cong K_1\vee(2K_3\cup K_{n-7})$  (see Fig.1).
 If $G^*\ncong H_1$, then $G^*$ must have the graph $H_3$ (see Fig.1) as a spanning subgraph.
Since $G^*[V_3\backslash\{x,y\}]=K_{n-8}$, we can choose $\frac{n-8}{2}K_2$ as matching edges from $K_{n-8}$ such that $f^*(e)=1$, and by replacing weight $\frac{1}{2}$ on thick edges with $1$, and other edges with $f^*(e)=0$ (see Fig. 1 $H_3$), we find a fractional perfect matching $f^*$  with $|\{e: f^*(e)=\frac{1}{2}\}|=0$,
a contradiction. From the maximality of $\rho(G^*)$, we have $G^*\cong H_1$.

If $n=8$, $G^*\cong K_1\vee(2K_3\cup K_1)$, by a simple calculation, $3.73\approx\rho(G^*)<\rho(K_1\vee(K_5\cup2K_1))=\theta(8)\approx5.07$, a contradiction.

If $n\geq 10$, $|V_3\backslash x|=n-7\geq 3$, let $X$ be the Perron vector of $G^*$ with respect to $\rho(G^*)=\rho$. From the symmetry of $G^*$, we can assume that $X$ take the same value (say $a, b, c$) on the vertices of $V_3\backslash x, x, V_1\cup V_2$, respectively. Then from $A(G^*)X=\rho X$, we have
\begin{equation*}
\begin{aligned}
 \rho a&=(n-8)a+b\\
 \rho c&=2c+b\\
\end{aligned}
\end{equation*}
Thus, $c=\frac{b}{\rho-2}\leq\frac{b}{\rho-n+8}=a$ since $\rho>\rho(K_{n-6})=n-7\geq3$. Now, assume $w\in V_1\cup V_2$, let $G'$ be the graph obtained from $G^*$ by removing all edges incident with $w$  and making $w$ adjacents to all vertices of $V_3$. Using the Rayleigh quotient, we have
$$\rho(G')-\rho(G^*)\geq X^TA(G')X-X^TA(G^*)X=2c((n-7)a-2c)>0.$$
Therefore, same as $w$, by moving two vertices of $V_1$ into $V_3$ in $G^*$, and two vertices of $V_2$ into $V_3$ in $G^*$, the resulting graph $G''\cong K_1\vee(K_{n-3}\cup 2K_1)=G_4$, and $\theta(n)=\rho(G'')>\rho(G^*)$, a contradiction.

From above discussion, we know there is no graph with maximal spectral radius in $\mathcal{H}$, which means $\mathcal{H}=\emptyset$. Therefore, $|\{e: \ f(e)=\frac{1}{2}\}|=0$,  we complete the proof.
\end{proof}

From Theorem \ref{thm-6}, we can get the following  result of O \cite{SO}.
\begin{cor}[\cite{SO}]\label{cor3-2}
Let $n\geq8$ be an even integer or $n=4$. If $G$ is an $n$-vertex connected graph with $\rho(G)>\theta(n)$, where $\theta(n)$ is the largest root of $x^3-(n-4)x^2-(n-1)x+2(n-4)=0$, then $G$ has a perfect matching. For $n=6$, if $\rho(G)>\frac{1+\sqrt{33}}{2}$, then $G$ has a perfect matching.
\end{cor}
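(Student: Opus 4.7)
The plan is to recover Corollary \ref{cor3-2} as a direct consequence of Theorem \ref{thm-6}. The main case $n\ge 8$ even is nothing more than a restatement of Theorem \ref{thm-6}(2), so no new work is needed there; the entire content of the proof reduces to the two small exceptional values $n=4$ and $n=6$, where one must bridge from ``fractional perfect matching'' (which Theorem \ref{thm-6}(1) still supplies) to ``perfect matching.''

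First I would handle $n=4$. A quick calculation shows that $\theta(4)=\sqrt{3}$ is the largest root of $x^3-3x=0$, and this is exactly the value produced by the formula in Theorem \ref{thm-6}(1) when we plug in $\beta^*=(n-1)/2=3/2$, i.e.\ $\lfloor\beta^*\rfloor=1$. Hence $\rho(G)>\theta(4)$ forces $G$ to carry a fractional perfect matching, so Lemma \ref{lem-1-4} supplies a partition of $V(G)$ whose blocks are either $K_2$'s or Hamiltonian graphs on an odd number of vertices (so of size $2$ or $\ge 3$). The only way to partition $4$ subject to these constraints is $\{K_2,K_2\}$, which is itself a perfect matching.

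Next I would handle $n=6$. The threshold $\frac{1+\sqrt{33}}{2}$ is exactly the value delivered by Theorem \ref{thm-6}(1) at $n=6$ with $\beta^*=5/2$, $\lfloor\beta^*\rfloor=2$, so the hypothesis $\rho(G)>\frac{1+\sqrt{33}}{2}$ again guarantees a fractional perfect matching. Lemma \ref{lem-1-4} then leaves only two possible partition types for $V(G)$: $\{K_2,K_2,K_2\}$ (which \emph{is} a perfect matching) or $\{K_3,K_3\}$ (a Hamiltonian graph on $3$ vertices must be $K_3$). In the second case I would invoke connectedness of $G$ to produce an edge $uv$ between the two triangles; then $uv$, together with the one edge of each $K_3$ avoiding $\{u,v\}$, supplies a perfect matching on $6$ vertices.

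The only step that requires any genuine thought is the $\{K_3,K_3\}$ case at $n=6$, where connectivity is used to override the fact that a fractional perfect matching alone does not in general guarantee an integral one; the rest is just checking the two numerical coincidences ($\theta(4)=\sqrt 3$ and the $n=6$ formula) against the formula in Theorem \ref{thm-6}(1), plus a hand-enumeration of the few admissible partition types for $n\in\{4,6\}$ in Lemma \ref{lem-1-4}.
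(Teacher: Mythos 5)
Your proposal is correct and follows essentially the same route as the paper: the case $n\ge 8$ is read off from Theorem \ref{thm-6}(2), and for $n=4$ and $n=6$ one checks that the stated thresholds coincide with the bound in Theorem \ref{thm-6}(1) and then upgrades the resulting fractional perfect matching to an integral one via the partition of Lemma \ref{lem-1-4} (the paper's Figure~2 argument for $n=6$ is exactly your $\{K_3,K_3\}$ case resolved by a connecting edge). No substantive difference.
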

\begin{proof}

According to Theorem \ref{thm-6}, if $n\geq8$, the result follows.

If $n=4$, by Theorem \ref{thm-6} $(1)$, let $\beta^*=\frac{3}{2}$, if $\rho(G)>\frac{\lfloor\frac{3}{2}\rfloor-1+\sqrt{(\lfloor\frac{3}{2}\rfloor-1)^2+4\lfloor\frac{3}{2}\rfloor(4-\lfloor\frac{3}{2}\rfloor)}}{2}=\sqrt{3}$ , then $G$ has a perfect fractional matching. Besides $\sqrt{3}$ is the largest root of $x^3-3x=0$. Let $f$ be a fractional matching with the greatest number of edges $e$ with $f(e)=1$,it is easy to see $G$ have no edges with weight $\frac{1}{2}$ since $\beta^*(G)=\frac{n}{2}=2$, as required.
\begin{figure}[htbp]
\centering
\begin{tikzpicture}[x=1.00mm, y=1.00mm, inner xsep=0pt, inner ysep=0pt, outer xsep=0pt, outer ysep=0pt,scale=0.7]
\path[line width=0mm] (34.71,47.61) rectangle +(193.86,40.45);
\definecolor{L}{rgb}{0,0,0}
\definecolor{F}{rgb}{0,0,0}
\path[line width=0.15mm, draw=L, fill=F] (69.69,71.76) circle (1.00mm);
\path[line width=0.15mm, draw=L, fill=F] (53.61,82.56) circle (1.00mm);
\path[line width=0.15mm, draw=L, fill=F] (53.89,61.78) circle (1.00mm);
\path[line width=0.15mm, draw=L, fill=F] (104.33,71.20) circle (1.00mm);
\path[line width=0.15mm, draw=L, fill=F] (115.97,85.06) circle (1.00mm);
\path[line width=0.15mm, draw=L, fill=F] (117.08,61.78) circle (1.00mm);
\path[line width=0.15mm, draw=L] (53.34,83.67) -- (53.61,61.22);
\path[line width=0.15mm, draw=L] (53.06,82.29) -- (69.69,71.48);
\path[line width=0.15mm, draw=L] (53.34,60.95) -- (70.52,72.03);
\path[line width=0.15mm, draw=L] (104.05,71.76) -- (116.52,85.89);
\path[line width=0.15mm, draw=L] (104.05,70.92) -- (117.08,61.78);
\path[line width=0.15mm, draw=L] (115.97,85.61) -- (117.63,60.67);
\path[line width=0.15mm, draw=L] (69.69,71.20) -- (105.44,71.20);
\path[line width=0.15mm, draw=L, fill=F] (160.04,85.06) circle (1.00mm);
\path[line width=0.15mm, draw=L, fill=F] (159.76,61.50) circle (1.00mm);
\path[line width=0.15mm, draw=L, fill=F] (175.00,72.03) circle (1.00mm);
\path[line width=0.15mm, draw=L, fill=F] (202.72,72.31) circle (1.00mm);
\path[line width=0.15mm, draw=L, fill=F] (218.79,85.06) circle (1.00mm);
\path[line width=0.15mm, draw=L, fill=F] (218.79,62.06) circle (1.00mm);
\path[line width=0.15mm, draw=L] (175.28,71.76) -- (159.48,85.89);
\path[line width=0.15mm, draw=L] (175.28,71.76) -- (158.93,61.22);
\path[line width=0.15mm, draw=L] (174.45,71.48) -- (202.72,72.03);
\path[line width=0.15mm, draw=L] (202.72,72.31) -- (218.79,85.61);
\path[line width=0.15mm, draw=L] (202.16,72.03) -- (219.34,61.50);
\path[line width=0.15mm, draw=L] (219.34,85.06) -- (219.07,60.95);
\path[line width=0.15mm, draw=L] (159.76,85.06) -- (159.48,60.95);
\definecolor{T}{rgb}{0,0,0}
\draw[T] (108,60) node[anchor=base west]{\fontsize{14.23}{17.07}\selectfont \textcolor[rgb]{0, 0, 0}{$\frac{1}{2}$}};
\draw[T] (60,83) node[anchor=base west]{\fontsize{14.23}{17.07}\selectfont \textcolor[rgb]{0, 0, 0}{$\frac{1}{2}$}};
\draw[T] (60,60) node[anchor=base west]{\fontsize{14.23}{17.07}\selectfont \textcolor[rgb]{0, 0, 0}{$\frac{1}{2}$}};
\draw[T] (108,83) node[anchor=base west]{\fontsize{14.23}{17.07}\selectfont \textcolor[rgb]{0, 0, 0}{$\frac{1}{2}$}};
\draw[T] (118.54,70) node[anchor=base west]{\fontsize{14.23}{17.07}\selectfont \textcolor[rgb]{0, 0, 0}{$\frac{1}{2}$}};
\draw[T] (48,70) node[anchor=base west]{\fontsize{14.23}{17.07}\selectfont \textcolor[rgb]{0, 0, 0}{$\frac{1}{2}$}};
\draw[T] (85.21,72.59) node[anchor=base west]{\fontsize{14.23}{17.07}\selectfont \textcolor[rgb]{0, 0, 0}{$0$}};
\draw[T] (152.83,71.20) node[anchor=base west]{\fontsize{14.23}{17.07}\selectfont \textcolor[rgb]{0, 0, 0}{$1$}};
\draw[T] (219.07,71.48) node[anchor=base west]{\fontsize{14.23}{17.07}\selectfont \textcolor[rgb]{0, 0, 0}{$1$}};
\draw[T] (183.04,72.31) node[anchor=base west]{\fontsize{14.23}{17.07}\selectfont \textcolor[rgb]{0, 0, 0}{$1$}};
\draw[T] (166.69,77.58) node[anchor=base west]{\fontsize{14.23}{17.07}\selectfont \textcolor[rgb]{0, 0, 0}{$0$}};
\draw[T] (166.69,77.58) node[anchor=base west]{\fontsize{14.23}{17.07}\selectfont \textcolor[rgb]{0, 0, 0}{$0$}};
\draw[T] (202.44,62) node[anchor=base west]{\fontsize{14.23}{17.07}\selectfont \textcolor[rgb]{0, 0, 0}{$0$}};
\draw[T] (202.16,78.13) node[anchor=base west]{\fontsize{14.23}{17.07}\selectfont \textcolor[rgb]{0, 0, 0}{$0$}};
\draw[T] (167.24,62) node[anchor=base west]{\fontsize{14.23}{17.07}\selectfont \textcolor[rgb]{0, 0, 0}{$0$}};
\draw[T] (79.39,50.69) node[anchor=base west]{\fontsize{14.23}{17.07}\selectfont \textcolor[rgb]{0, 0, 0}{$(a)$}};
\draw[T] (183.87,52.63) node[anchor=base west]{\fontsize{14.23}{17.07}\selectfont \textcolor[rgb]{0, 0, 0}{$(b)$}};
\end{tikzpicture}%
\caption{the case of n=6.}\label{fig-1}
\end{figure}
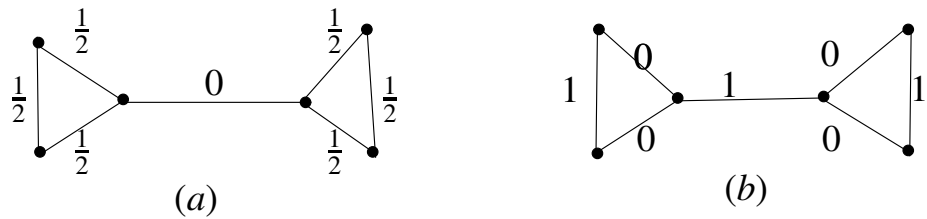

If $n=6$, by Theorem \ref{thm-6} $(1)$, let $\beta^*=\frac{5}{2}$, if $\rho(G)>\frac{\lfloor\frac{5}{2}\rfloor-1+\sqrt{(\lfloor\frac{5}{2}\rfloor-1)^2+4\lfloor\frac{5}{2}\rfloor(6-\lfloor\frac{5}{2}\rfloor)}}{2}=\frac{1+\sqrt{33}}{2},$ then $G$ has a perfect fractional matching. Let $f$ be a fractional matching with the greatest number of edges $e$ with $f(e)=1$. If there exits some edges with weight $\frac{1}{2}$, then by Lemma \ref{lem-1-4} and Remark \ref{rem-1}, we know $G$ must have $Fig.~2(a)$ as a spanning subgraph. In this case, we can construct a fractional matching with more weight $1$ (see  $Fig.~2(b)$), a contradiction. Therefore, $G$ has a perfect matching .
\end{proof}

In the following, we present a lower bound for the spectral radius which guarantees $\beta(G)\geq \beta+1$, where $\beta~(\beta\leq\frac{n-2}{2})$ is
a positive integer number.
\begin{thm}\label{thm-4}
Let $G$ be a connected graph of order $n~(n\geq3)$. Given a positive integer number $\beta~(\beta\leq\frac{n-2}{2})$, then
\begin{enumerate}[(1)]
\vspace{-0.2cm}
\item For $\frac{n+1}{3}\leq\beta\leq\frac{n-2}{2}~(n\geq8)$, if $\rho(G)>\theta$, where $\theta$ is the largest root of $x^3-(2\beta-2)x+(1-n)x+2(\beta-1)(n-2\beta)=0$ , then $\beta(G)\geq\beta+1$.
\vspace{-0.2cm}
\item For $\beta\leq\frac{n}{3}$, if $\rho(G)>\frac{\beta-1+\sqrt{(\beta-1)^2+4\beta(n-\beta)}}{2}$ , then $\beta(G)\geq\beta+1$.
\end{enumerate}
\end{thm}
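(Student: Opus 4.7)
The plan is to argue by contradiction, using Theorem~\ref{thm-2}, which already identifies the connected graph of maximal spectral radius for any prescribed matching number. In essence, the two cases of Theorem~\ref{thm-4} are simply the two nontrivial cases of Theorem~\ref{thm-2} read contrapositively, so the work is really in lining up the range conditions on $n$ and $\beta$.

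First, suppose for contradiction that $\beta(G)\le\beta$. Since $G$ is connected and $\beta\le(n-2)/2<\lfloor n/2\rfloor$, one can add edges of $K_n\setminus E(G)$ to $G$ one at a time to obtain a connected supergraph $G'$ with $\beta(G')=\beta$: each new edge raises the matching number by $0$ or $1$, we start at $\beta(G)\le\beta$, and we end at matching number $\lfloor n/2\rfloor\ge\beta$, so some intermediate graph hits $\beta$ exactly. Connectivity is preserved automatically. By Lemma~\ref{lem-1-1}, $\rho(G')\ge\rho(G)$, so $G'\in\mathcal{G}^*_{n,\beta}$ inherits the lower bound on the spectral radius assumed for $G$.

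Second, I would translate the hypotheses on $\beta$ into the ranges used in Theorem~\ref{thm-2}. In part~(1), the hypothesis $(n+1)/3\le\beta\le(n-2)/2$ is equivalent to $2\beta+2\le n\le 3\beta-1$, which is precisely the interval of Theorem~\ref{thm-2}(2). Applying that part to $G'$ yields $\rho(G')\le\theta$, with the cubic $\theta$ being the largest root of $x^3-(2\beta-2)x^2+(1-n)x+2(\beta-1)(n-2\beta)=0$ --- exactly the polynomial in Theorem~\ref{thm-4}(1). This contradicts $\rho(G)>\theta$. In part~(2), $\beta\le n/3$ is equivalent to $n\ge 3\beta$, which is the interval of Theorem~\ref{thm-2}(3), giving $\rho(G')\le\frac{\beta-1+\sqrt{(\beta-1)^2+4\beta(n-\beta)}}{2}$, again contradicting the hypothesis. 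In both cases, the assumption $\beta(G)\le\beta$ fails, so $\beta(G)\ge\beta+1$.

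There is no substantial obstacle: the whole argument is a contradiction plus a monotonicity step (Lemma~\ref{lem-1-1}) plus the invocation of Theorem~\ref{thm-2}. The only points requiring a little care are (a) verifying that the edge-augmentation procedure can indeed terminate with matching number exactly $\beta$ while keeping the graph connected, and (b) checking that the interval conditions on $n,\beta$ given in the statement line up with parts (2) and (3) of Theorem~\ref{thm-2} and with the cubic given in the statement (in particular, matching the extremal graph $K_1\vee(K_{2\beta-1}\cup(n-2\beta)K_1)$ for case~(1) and $K_\beta\vee\overline{K_{n-\beta}}$ for case~(2)). Both points are routine.
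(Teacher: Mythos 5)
Your proposal is correct and follows essentially the same route as the paper: argue by contradiction, pass to a connected supergraph with matching number exactly $\beta$, invoke Lemma~\ref{lem-1-1} for monotonicity of $\rho$, and apply Theorem~\ref{thm-2}(2)--(3) after translating the hypotheses into $2\beta+2\le n\le 3\beta-1$ and $n\ge 3\beta$. The only (immaterial) difference is how the supergraph with matching number exactly $\beta$ is produced --- you add edges one at a time and use a discrete intermediate-value argument, while the paper adds a suitable number of disjoint edges inside the independent set of vertices left uncovered by a maximum matching; both constructions are valid.
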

\begin{proof}
Let $M$ be the maximum matching of $G$. If $M$ is a perfect matching, it is easy to see $\beta(G)=\frac{n}{2}=\frac{n-2}{2}+1\geq\beta+1$.

If $M$ is a near perfect matching, then $\beta(G)=\frac{n-1}{2}$, $n$ is odd number. Since $\beta$ is an integer number,
$\beta\leq \frac{n-3}{2}$. Then $\beta(G)=\frac{n-1}{2}=\frac{n-3}{2}+1\geq\beta+1$ , as required.

If $M$ is neither perfect matching nor near-perfect matching, then there exist at least two vertices not matched by $M$,
we denote $U$ as the set of vertices which not matched by $M$. Obviously, $|U|\geq2$, and $|U|$ is an independent set.
Suppose $v_1, v_2\in U$, let $G'$ be the graph obtained from $G$ by adding an edge between $v_1$ and $v_2$, it is obviously
that $\beta(G')=\beta(G)+1$ and $\rho(G')>\rho(G)$. If $\beta(G)\leq\beta$,
we can construct a graph $\widehat{G}$ such that $\beta(\widehat{G})=\beta$ by adding edges in the vertex set $U$,
it is easy to see that $\rho(G)\leq\rho(\widehat{G})$ by Lemma \ref{lem-1-1}.  Combining with Theorem \ref{thm-2}, we know
\begin{enumerate}[(1)]
\vspace{-0.2cm}
\item If $2\beta+2\leq n\leq3\beta-1$, then $\rho(G)\leq\rho(\widehat{G})\leq \theta$ , where $\theta$ is the largest root of $x^3-(2\beta-2)x+(1-n)x+2(\beta-1)(n-2\beta)=0$ , a contradiction.
\vspace{-0.4cm}
\item If $n\geq3\beta$, then $\rho(G)\leq\rho(\widehat{G})\leq \frac{\beta-1+\sqrt{(\beta-1)^2+4\beta(n-\beta)}}{2}$ , a contradiction.
\end{enumerate}
This completes the proof.
\end{proof}

 From Theorem \ref{thm-4}, we can also get the main result of O \cite{SO} (see Corollary \ref{cor3-2}) immediately.

\begin{proof}
By Theorem \ref{thm-4}, let $\beta=\frac{n-2}{2}$, if $n\geq8$, $\rho(G)>\theta$, where $\theta$ is the largest root of $x^3-(n-4)x^2-(n-1)x+2(n-4)=0$, then $\beta(G)\geq \frac{n-2}{2}+1=\frac{n}{2}$, which means $G$ has a perfect matching.

When $n=4$, $\beta=\frac{n-2}{2}=1$.  By Theorem \ref{thm-4}, if $\rho(G)>\sqrt{3}$ , then $\beta(G)\geq \frac{n}{2}$, where $\sqrt{3}$ is the largest root of $x^3-3x=0$.

When $n=6$, $\beta=\frac{n-2}{2}=2$. By Theorem \ref{thm-4}, if $\rho(G)>\frac{1+\sqrt{33}}{2}$, then $\beta(G)\geq \frac{n}{2}$.

This completes the proof.
\end{proof}

\end{document}